\def\thead{}
\def\doiCode{10.1007/s00022-018-0426-2}\def\thead{doi: {\href{http://dx.doi.org/\doiCode}{\doiCode}}}
\def\@seccntformat#1{%
  \protect\textup{%
    \protect\@secnumfont
    \expandafter\protect\csname format#1\endcsname 
    \csname the#1\endcsname
    \protect\@secnumpunct
  }%
}
\xpatchcmd{\section}{\scshape}{\bfseries\scshape}{}{}
\xpatchcmd{\subsection}{\bfseries}{\bfseries\scshape}{}{}
\xpatchcmd{\subsubsection}{\itshape}{\scshape}{}{}
\xpatchcmd{\proof}{\itshape}{\bfseries}{}{}
\numberwithin{equation}{section}
\numberwithin{figure}{section}
\newcommand\includetikz[2][x=1mm,y=1mm]{%
 \IfFileExists{#2.tikz}{%
	\begin{tikzpicture}[#1]\node at (0,0) {\input{#2.tikz}};\end{tikzpicture}%
 }{%
	\@latex@error{No usable file `#2.tikz' can be found}%
	 {I could not locate the file...}%
 }
}
\shortauthors\ 
\setlist[enumerate,1]{label={\upshape(\arabic*)}}
\providecommand\textqq[1]{``#1''}
\theoremstyle{plain}
\newtheorem{theorem}{Theorem}[section]
\newtheorem{lemma}[theorem]{Lemma}
\newtheorem{corollary}[theorem]{Corollary}
\theoremstyle{definition}
\newtheorem*{acknowledgement}{Acknowledgement}
\def\Bf#1{\ifmmode\boldsymbol{#1}\else{\rmfamily\bfseries#1}\fi}
\DeclareMathOperator{\sign}{sign}
\title[Straight projective-metric spaces with centers]
{Straight projective-metric spaces with centers}
\subjclass{51F99; 53A35, 52A20.}
\keywords{projective-metric, central symmetry}
\author[\'A. Kurusa]{\'Arp\'ad Kurusa}
\thanks{This research was supported by 
	NFSR of Hungary (OTKA), grant number K~116451}
\address{\'A. Kurusa,
\upshape Bolyai Institute, University of Szeged,
Aradi v\'ertan\'uk tere 1, 6725 Szeged (Hungary);
E--mail: {\tt kurusa@math.u-szeged.hu }.
}
\begin{document}%

\begin{abstract}
 It is proved that a straight projective-metric space has an open set of centers
 if and only if it is either the hyperbolic or a Minkowskian geometry.
 It is also shown that if a straight projective-metric space has some finitely many
 well-placed centers,
 then it is either the hyperbolic or a Minkowskian geometry.
\end{abstract}


\maketitle

\section{Introduction}\label{sec:intro}

Let $(\mathcal M,d)$ be a metric space given in a set $\mathcal M$
with the metric~$d$.
If $\mathcal M$ is a projective space $\mathbb P^n$
 or an affine space $\mathbb R^n\subset\mathbb P^n$
 or a proper open convex subset of~$\mathbb R^n$ for some $n\in\mathbb N$,
and
 the metric $d$ is complete,
 continuous with respect to the standard topology of $\mathbb P^n$,
and
 the geodesic lines of $d$ are exactly
 the non-empty intersection of $\mathcal M$ with the straight lines,
then the metric~$d$ is called \emph{projective}.

If 
 $\mathcal M=\mathbb P^n$ and
 the geodesic lines of $d$ are isometric with a Euclidean circle,
or
 $\mathcal M\subseteq\mathbb R^n$ and
 the geodesic lines of $d$ are isometric with a Euclidean straight line,
then
 $(\mathcal M,d)$ is called a \emph{projective-metric space}
 of dimension~$n$
(see \cite[p.~115]{BusemannKelly1953} and~\cite[p.~188]{Szabo})

Such projective-metric spaces 
are called of
\emph{elliptic, parabolic or hyperbolic type}
according to whether $\mathcal M$ is
$\mathbb P^n$, $\mathbb R^n$, or a proper convex subset of~$\mathbb R^n$.
The projective-metric spaces of the latter two types are called \emph{straight}
\cite[p.~1]{Busemann1955}.

A \emph{metric point reflection $\rho_{d;O}$} of a projective-metric space $(\mathcal M,d)$
is a non-identical, involutive $d$-isometry of $\mathcal M$ onto $\mathcal M$
such that it fixes point $O$ and keeps every geodesic line passing through~$O$.
A \emph{center} of the projective-metric space $(\mathcal M,d)$ is 
a point $O\in\mathcal M$, where there exists a metric point reflection $\rho^{}_{d;O}$.
If every point of a projective-metric space is a center,
then it is said to be \emph{symmetric}.

\emph{What are the symmetric projective-metric spaces?}


Working with $G$-spaces\footnote{Projective-metric spaces
are the Desarguesian $G$-spaces \cite[p.~188]{Szabo}.},
Busemann proved in \cite{Busemann1955} that
a symmetric $G$-space of elliptic type is elliptic \cite[(49.5)]{Busemann1955}\footnote{%
\textqq{There is no similar theorem for straight $G$-spaces}
as Busemann proves on \cite[p.~346]{Busemann1955}.},
and a symmetric $G$-space of dimension~$2$ is either elliptic,
or hyperbolic, or Minkowskian \cite[(52.8)]{Busemann1955}.

In this paper we complement Busemann's results
for straight projective-metric spaces
by proving \emph{directly} in \emph{every dimension}
 (Theorem~\ref{thm:PPMcent} and Theorem~\ref{thm:HPMcent})
that
a straight projective-metric space
has a non-empty open set of centers
if and only if it is
a Minkowskian or the hyperbolic geometry, respectively.
Further, we show
 (Theorems~\ref{thm:strPMcent}, \ref{thm:parPMcentMinkowski} and \ref{thm:hypPMcentBolyai})
that
a straight projective-metric space
has some finitely many well-placed centers
if and only if it is
a Minkowskian or the hyperbolic geometry, respectively.


\section{Notations and preliminaries}

Points of $\mathbb R^n$ are denoted as $A,B,\dots$,
vectors are $\overrightarrow{AB}$ or $\Bf a , \Bf b , \dots$.
Latter notations are also used for points if the origin is fixed.
Open segment with endpoints $A$ and $B$ is denoted by $\overline{AB}$ 
and the line through $A$ and $B$ is denoted by $AB$.
The Euclidean scalar product is $\langle \cdot ,\cdot \rangle$.

The \emph{affine ratio} $(A,B;C)$ of the collinear points $A$, $B$ and $C\ne B$
is defined by $(A,B;C)\overrightarrow{BC}=\overrightarrow{AC}$.
The \emph{affine cross ratio} of the collinear points $A$, $B$, $C\ne B$,
and $D\ne A$ is $(A,B;C,D)=(A,B;C)/(A,B;D)$ \cite[page 243]{BusemannKelly1953}.
The \emph{affine point reflection} $\bar\rho^{}_{O}$ at point $O$
is defined by 
$(X,\bar\rho^{}_{O}(X);O)=-1$ for every point $X\ne O$
and by $\bar\rho^{}_{O}(O)=O$.

According to \cite[p.~64]{MontejanoMorales2003},
a point $O$ is called a \emph{projective center} of the set $\mathcal S\subseteq\mathbb P^n$,
if there is a projectivity $\varpi$ 
such that $\varpi(O)$ is the affine center of $\varpi(\mathcal S)$.

Fix a point $O$ in the convex open bounded domain $\mathcal D\subseteq\mathbb R^n$.
We define $O^*$ as the locus of every point $P$ which is
the harmonic conjugate\footnote{Point $P$ satisfies $(A,B;O,P)=-1$.}
of $O$ with respect to points $A$ and $B$,
where $\{A,B\}=\partial\mathcal D\cap OP$.
It is easy to see that
a point $O$ is a projective center of $\mathcal D$
if and only if
$O^*$ is a straight line that does not intersect $\mathcal D$
\cite[p.~64]{MontejanoMorales2003}.

%



If a projective-metric space  $(\mathcal M,d)$ is given,
we denote the geodesic line on the projective line $\ell$ by $\tilde\ell$,
i.e. $\tilde\ell=\ell\cap\mathcal M$.

\subsection{Projectively invariant metrics on projective lines}
The following easy, perhaps folkloric results are provided here for the sake of completeness,
and because the author could not find a really good reference for them.

\begin{lemma}\label{lem:PMonSegment}
 Let the function $h\colon(a,b)\times(a,b)\to\mathbb R$ be
 such that $h(x,y)+h(y,z)=h(x,z)$ and $h(x,y)=h(\varpi(x),\varpi(y))$
 for  every $x,y,z\in(a,b)$ and
 any projectivity $\varpi\colon(a,b)\to(a,b)$.
 If $h$ is bounded on an open subset of $(a,b)\times(a,b)$,
 then there exists a constant  $c\in\mathbb R$ such that
 \[h(x,y)=c|\ln(a,b;x,y)|.\]
\end{lemma}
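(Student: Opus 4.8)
The plan is to pass to a projective coordinate in which the cross ratio becomes a difference and the projectivities of $(a,b)$ become affine maps, so that the two hypotheses on $h$ collapse to Cauchy's functional equation together with a boundedness condition.

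First I would assume $a<b$ and introduce $\varphi\colon(a,b)\to\mathbb R$ by $\varphi(x)=\ln\frac{x-a}{b-x}$, a strictly increasing bijection. A direct computation from the definition $(A,B;C)\overrightarrow{BC}=\overrightarrow{AC}$ gives the identity
\[
 \ln(a,b;x,y)=\varphi(x)-\varphi(y).
\]
In the M\"obius coordinate $z=\frac{x-a}{b-x}$ (which sends $a\mapsto 0$, $b\mapsto\infty$) every projectivity $\varpi\colon(a,b)\to(a,b)$ fixing the endpoints has the form $z\mapsto\kappa z$ with $\kappa>0$, while every endpoint-swapping one has the form $z\mapsto\kappa/z$; after the substitution $\varphi=\ln z$ these become, respectively, the translations $u\mapsto u+s$ and the reflections $u\mapsto s-u$ of $\mathbb R$. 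Hence, writing $H(u,v):=h(\varphi^{-1}(u),\varphi^{-1}(v))$, projective invariance of $h$ is equivalent to invariance of $H$ under all translations and all reflections of the coordinate.

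Next I would use these symmetries to strip $H$ down to one variable. Translation invariance forces $H(u,v)=F(v-u)$ for $F(t):=H(0,t)$, and the endpoint-swapping reflections force the symmetry $H(u,v)=H(v,u)$, i.e. $F$ even; this evenness is the source of the absolute value in the conclusion. Reading the additivity hypothesis along a betweenness configuration $\varphi(x)<\varphi(y)<\varphi(z)$ and setting $p=\varphi(y)-\varphi(x)>0$, $q=\varphi(z)-\varphi(y)>0$ turns $h(x,y)+h(y,z)=h(x,z)$ into $F(p)+F(q)=F(p+q)$, that is, Cauchy's functional equation on $(0,\infty)$.

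Finally I would bring in boundedness. The open set on which $h$ is bounded maps under $\varphi\times\varphi$ to an open set on which $F(v-u)$ is bounded, so $F$ is bounded on some interval of positive differences. The hard part is exactly here: a solution of Cauchy's equation can be wildly pathological, but local boundedness rules this out, and the classical argument (boundedness on a set of positive measure forces linearity) gives $F(t)=ct$ for $t>0$. Back-substituting and invoking the evenness, $h(x,y)=c\,|\varphi(x)-\varphi(y)|=c\,|\ln(a,b;x,y)|$, as claimed. The only genuine obstacle is this boundedness-to-linearity step for Cauchy's equation; the coordinate change and the reduction to the functional equation are routine.
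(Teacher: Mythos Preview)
Your argument is essentially the paper's: pass via $x\mapsto\frac{x-a}{b-x}$ (the paper lands on $\mathbb R_+$, you take one further logarithm and land on $\mathbb R$) so that the projectivities of $(a,b)$ become dilations/translations and the additivity collapses to Cauchy's equation, then invoke local boundedness to force linearity. Your one genuine addition is the use of the endpoint\nobreakdash-swapping projectivities to obtain the evenness of $F$ and hence the absolute value in the conclusion; the paper's own proof uses only the endpoint\nobreakdash-fixing ones and in fact terminates with the signed expression $h(x,y)=c\ln\bigl(\frac{y-a}{b-y}:\frac{x-a}{b-x}\bigr)$.
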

\begin{proof}
Let $\mathbb R_{+}=\{x\in\mathbb R:x>0\}$ 
and fix the projectivity 
$\omega\colon x\in(a,b)\mapsto\frac{x-a}{b-x}\in\mathbb R_{+}$.
Then the function
$f\colon(x,y)\!\in\!\mathbb R_{+}\!\times\!\mathbb R_{+}
 \mapsto h(\omega^{-1}(x),\omega^{-1}(y))\!\in\!\mathbb R$
clearly satisfies $f(x,y)+f(y,z)=f(x,z)$ and $f(x,y)=f(\hat\varpi(x),\hat\varpi(y))$
for  every $x,y,z\in\mathbb R_{+}$ and
for any surjective projectivity $\hat\varpi\colon\mathbb R_{+}\to\mathbb R_{+}$,
as $\omega^{-1}\circ\hat\varpi\circ\omega\colon(a,b)\to(a,b)$
is a surjective projectivity.

A projectivity of an affine straight line 
is an affinity.
An affinity of a straight line with a fixed point is a dilation,
thus we have
$f(x,y)=f(bx,by)$ for every $x,y\in\mathbb R_+$ and $b\in\mathbb R_+$.
Choosing $b=1/x$ implies $f(x,y)=f(1,y/x)$ for every $x,y\in\mathbb R_+$,
from which $f(1,y/x)+f(1,z/y)=f(x,y)+f(y,z)=f(x,z)=f(1,z/x)$ follows
for every $x,y\in\mathbb R_+$,
hence $f(1,s)+f(1,t)=f(1,st)$ for every $s,t\in\mathbb R_+$.

Let $g(u)=f(1,e^u)$ for every $u\in\mathbb R_+$.
Then $g(p)+g(q)=g(p+q)$ for every $p,q\in\mathbb R_+$,
hence, by known properties of Cauchy's functional equation \cite{WP-CauchyEqu},
$g(u)=cu$ follows for some $c\in\mathbb R$ and every $u\in\mathbb R_+$.
Thus
\begin{equation*}
 \begin{split}
  h(x,y)
   &=f(\omega(x),\omega(y))
    =f(1,\omega(y)/\omega(x))
    =g(\ln(\omega(y)/\omega(x)))
  \\
   &=c\cdot\ln(\omega(y):\omega(x))
    =c\ln\Big(\frac{y-a}{b-y}:\frac{x-a}{b-x}\Big).\qedhere
 \end{split}
\end{equation*}
\end{proof}

\def\looseBUTDONTforget{%
\begin{lemma}\label{lem:PMongcircle}
 Fix an $m>0$ and let $e\colon\mathbb P\times\mathbb P\to[0,m]$
 be a continuous function that satisfies 
 $e([\Bf x],[\Bf y])=0$ if and only if $[\Bf x]=[\Bf y]$,
 fulfills
 \[e([\Bf x],[\Bf y])+e([\Bf y],[\Bf z])
   =\begin{cases}
     e([\Bf x],[\Bf z]),   &\text{ if $e([\Bf x],[\Bf y])+e([\Bf y],[\Bf z])\le m$},\\
     2m-e([\Bf x],[\Bf z]),&\text{ if $e([\Bf x],[\Bf y])+e([\Bf y],[\Bf z])> m$},
    \end{cases}
 \]
 and satisfies $e([\Bf x],[\Bf y])=e(\varpi([\Bf x]),\varpi([\Bf y]))$
 for every surjective projectivity $\varpi\colon\mathbb P\to\mathbb P$
 that has no fixed point.
 Then
 \[e([\Bf u_\xi],[\Bf u_\psi])=2m\cdot
   \begin{cases}
     |\tilde\xi-\tilde\psi|/\pi,  &\text{ if $|\tilde\xi-\tilde\psi|\in[0,\pi]$},\\
     1-|\tilde\xi-\tilde\psi|/\pi,&\text{ if $|\tilde\xi-\tilde\psi|\in[\pi,2\pi]$},
   \end{cases}
 \]
 where $\tilde\zeta=\zeta-2\pi\big[\frac{\zeta}{2\pi}\big]$.
\end{lemma}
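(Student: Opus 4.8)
The strategy is to follow the proof of Lemma~\ref{lem:PMonSegment}, but with the dilation subgroup used there replaced by the rotation subgroup that the fixed-point-free projectivities supply here. First I would observe that in the parametrization $\Bf u_\xi=(\cos\xi,\sin\xi)$ every rotation $R_\alpha\in\mathrm{SO}(2)$ induces a projectivity of $\mathbb P$ acting by $[\Bf u_\xi]\mapsto[\Bf u_{\xi+\alpha}]$, and that for $\alpha\not\equiv 0\ (\mathrm{mod}\ \pi)$ this projectivity is surjective with no real eigendirection, hence no fixed point. The hypothesis therefore applies to each such $R_\alpha$, and by continuity to every $\alpha$, giving $e([\Bf u_\xi],[\Bf u_\psi])=e([\Bf u_{\xi+\alpha}],[\Bf u_{\psi+\alpha}])$ for all $\alpha$. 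Consequently $e([\Bf u_\xi],[\Bf u_\psi])$ depends only on the difference of the parameters, and I would record this by setting $F(\theta):=e([\Bf u_0],[\Bf u_\theta])$, a continuous function with $F(0)=0$ that is $\pi$-periodic because $[\Bf u_{\xi+\pi}]=[\Bf u_\xi]$.

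Next I would feed this reduction into the additivity hypothesis applied to the ordered triple $[\Bf u_0],[\Bf u_s],[\Bf u_{s+t}]$ with small $s,t>0$. Using rotation invariance to compute $e([\Bf u_s],[\Bf u_{s+t}])=F(t)$, the first branch of the relation becomes the Cauchy equation $F(s)+F(t)=F(s+t)$, valid as long as the sum stays below $m$. Since $F$ is continuous and bounded there, the same input from Cauchy's functional equation used in Lemma~\ref{lem:PMonSegment} \cite{WP-CauchyEqu} forces $F(\theta)=c\,\theta$ with a constant $c$ on the initial interval on which no wrap-around occurs.

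Finally I would propagate this linear piece around the projective line. The $\pi$-periodicity together with the second, $2m-(\cdot)$ branch of the relation forces $F$ to grow linearly with slope $c$ until it attains the maximal value $m$ at the antipodal configuration and then to decrease as its mirror image, producing the stated sawtooth profile; matching this maximum and the period then pins down $c$ and yields the claimed piecewise formula in $\tilde\xi-\tilde\psi$, the absolute values reflecting the symmetry of $e$. I expect the main obstacle to lie precisely in this last propagation: the Cauchy step only delivers linearity while $F(s)+F(t)\le m$, so one must argue carefully, using continuity and the switch governed by the second branch, that the linear segment extends exactly up to the antipode and that the reflected segment closes up without a jump, rather than trying to solve the two-branch functional equation globally in one stroke.
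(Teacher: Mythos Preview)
Your proposal is correct and follows essentially the same route as the paper: reduce to a one-variable function via rotation invariance, obtain linearity on a small interval from the additive branch (the paper does this by a dyadic halving argument rather than by quoting Cauchy directly, but the content is the same), and then propagate using the second branch and periodicity. Your observation that $[\Bf u_{\xi+\pi}]=[\Bf u_\xi]$ forces $\pi$-periodicity is in fact sharper than what the paper records, and your identification of the propagation step as the delicate point matches where the paper spends its effort.
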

\begin{proof}
%
%
A surjective projectivity $\varpi\colon\mathbb P^{1}\to\mathbb P^{1}$
is of the form $[\Bf x]\mapsto[\Bf x\Bf M]$,
where $\Bf M$ is a non-degenerate matrix of type $2\times2$.
If $\varpi$ has no fixed point, then $\Bf M$ does not have eigenvector,
hence it is a rotation, that is,
$\varpi\colon\Bf u_\xi\mapsto\big[\Bf u_\xi
\big(\begin{smallmatrix}\cos\varphi&-\sin\varphi\\\sin\varphi&\cos\varphi\end{smallmatrix}\big)
\big]=[\Bf u_{\xi+\varphi}]$
for some $\varphi\in\mathbb R$.
Thus, we have
$
e([\Bf u_\xi],[\Bf u_\psi])=e([\Bf u_{\xi+\varphi}],[\Bf u_{\psi+\varphi}])
$
for every $\xi,\psi,\varphi\in\mathbb R$, that implies
$ 
e([\Bf u_\xi],[\Bf u_\psi])=e([\Bf u_0],[\Bf u_{\psi-\xi}])
$ 
for every $\xi,\psi\in\mathbb R$.
Let $f(\sigma)=e([\Bf u_0],[\Bf u_{\sigma}]))/m$ for every $\sigma\in\mathbb R$.

Then $f$ is periodic by $2\pi$, continuous, non-negative, and
\begin{equation}\label{eq:kindofadditivity}
f(\sigma)+f(\theta)
  =\begin{cases}f(\sigma+\theta),&\text{ if $f(\sigma)+f(\theta)\le1$},\\
                2-f(\sigma+\theta),&\text{ if $f(\sigma)+f(\theta)>1$},\end{cases}
\end{equation}
for every $\sigma,\theta\in\mathbb R$.
Further, $f(\xi)$ vanishes only for $\xi=2k\pi$, where $k\in\mathbb Z$.

As $f$ is continuous and $f(0)=0$,
there is a $\delta>0$ such that $f(\xi)<1/2$ and $f(\xi)\ne0$ if $\xi\in(0,\delta)$.
Fix a $\xi\in(0,\delta)$.

Then, by \eqref{eq:kindofadditivity},
we have $1\ge2f(\xi/2)=f(\xi)$,
hence $2^if(\xi/2^i)=f(\xi)$ follows for every $i\in\mathbb N$.
By this, for any $c=\sum_{i=1}^\infty c_i2^{-i}$, where $c_i\in\{0,1\}$,
we have
\[
  1\ge cf(\xi)=\sum_{i=1}^\infty c_i2^{-i}f(\xi)
  =\sum_{i=1}^\infty f(c_i2^{-i}\xi)
  =g\Big(\sum_{i=1}^kc_i2^{-i}\xi\Big)+\sum_{i=k+1}^\infty f(c_i2^{-i}\xi)
\]
for every $k\in\mathbb N$.
As the last sum goes to zero with $k\to\infty$,
$cf(\xi)=f(c\xi)$ follows.
For $c\le1/f(\xi)$ we obtain
\begin{align*}
 1\ge cf(\xi)&=([c]+\{c\})f(\xi)=[c]f(\xi)+\{c\}f(\xi)=f([c]\xi)+f(\{c\}\xi)
 \\&=f(([c]+\{c\})\xi)=f(c\xi),
\end{align*}
where  $[c]$ is the integer part, and $\{c\}$ is the fractional part of $c$.
Continuing by considering $c\in[1/f(\xi),2/f(\xi)]$, we obtain
\[
 1\le cf(\xi)=2\frac{c}{2}f(\xi)=2g\Big(\frac{c}{2}\xi\Big)
 =2-f(c\xi).
\]
By the above, letting $p=\xi/f(\xi)$ gives $f(p)=1$ and $f(2p)=0$,
hence $p=2\pi$.
This proves the lemma.
\end{proof}
}

\begin{lemma}\label{lem:PMonsline}
 Let $m\colon\mathbb R\times\mathbb R\to\mathbb R$
 be a continuous function that
 satisfies $m(x,y)=0$ if and only if $x=y$,
 fulfills $m(x,y)+m(y,z)=m(x,z)$ if and only if $y\in[x,z]$,
 and satisfies $m(x,y)=m(\varpi(x),\varpi(y))$
 for every surjective projectivity $\varpi\colon\mathbb R\to\mathbb R$
 that has no fixed point.
 Then there are constants $c_+,c_-\in\mathbb R$ such that
 \[m(x,y)=c_{\sign(x-y)}|y-x|.\]
\end{lemma}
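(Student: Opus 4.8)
The plan is to first pin down which maps the invariance hypothesis actually involves. A surjective projectivity $\varpi\colon\mathbb R\to\mathbb R$ is the restriction of a projectivity of $\mathbb P^1$; since it carries the finite line bijectively onto itself, it cannot send any finite point to the point at infinity nor the reverse, so it must fix the point at infinity, whence it is an affinity $x\mapsto\lambda x+\mu$ with $\lambda\neq0$. Such an affinity has the fixed point $\mu/(1-\lambda)$ whenever $\lambda\neq1$, and is a genuine translation $x\mapsto x+\mu$ with $\mu\neq0$ exactly when $\lambda=1$. Hence the fixed-point-free surjective projectivities of $\mathbb R$ are precisely the nontrivial translations, and the invariance hypothesis collapses to ordinary translation invariance:
\[ m(x,y)=m(x+\mu,y+\mu)\qquad(\mu\in\mathbb R). \]

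Next I would set $F(t):=m(0,t)$. Translation invariance then gives $m(x,y)=F(y-x)$, and $F$ inherits continuity together with the property $F(t)=0\iff t=0$. The additivity hypothesis translates into a statement about $F$: writing $a=y-x$ and $b=z-y$, so that $z-x=a+b$, the betweenness condition $y\in[x,z]$ is equivalent to $a$ and $b$ having the same sign (both nonnegative or both nonpositive). Thus the hypothesis yields
\[ F(a)+F(b)=F(a+b)\quad\text{whenever $a,b\ge0$ and whenever $a,b\le0$.} \]

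Restricting separately to $[0,\infty)$ and to $(-\infty,0]$, the continuous $F$ satisfies Cauchy's functional equation on each half-line, so it must be linear on each: say $F(t)=c_-t$ for $t\ge0$ and $F(t)=-c_+t$ for $t\le0$, the two formulas agreeing at $t=0$ since $F(0)=0$, while the condition $F(t)=0\iff t=0$ forces both slopes to be nonzero. Substituting back, $m(x,y)=F(y-x)$ equals $c_-(y-x)=c_-|y-x|$ when $x<y$ and $c_+(x-y)=c_+|y-x|$ when $x>y$, which is exactly $m(x,y)=c_{\sign(x-y)}|y-x|$.

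The only genuinely delicate point is the first step: recognizing that, although the hypothesis is stated for \emph{all} fixed-point-free surjective projectivities, on the whole affine line these amount to nothing but the translations, so the projective invariance reduces to translation invariance. This is precisely what distinguishes the argument from Lemma~\ref{lem:PMonSegment}, where a bounded interval carries a much richer fixed-point-free (hyperbolic) projectivity group, forcing the genuinely logarithmic behavior there; here translation invariance alone does all the work, and everything after it is a routine two-sided application of the continuous solution of Cauchy's equation.
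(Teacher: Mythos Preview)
Your argument is correct and follows essentially the same route as the paper's own proof: both first reduce the fixed-point-free projectivities of $\mathbb R$ to the translations, deduce that $m$ depends only on $y-x$, and then solve Cauchy's equation separately on each half-line using continuity. Your write-up is in fact slightly tidier, since you introduce a single function $F(t)=m(0,t)$ rather than carrying out two parallel substitutions $b=-x$ and $b=-y$ as the paper does.
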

\begin{proof}
A projectivity of an affine straight line is an affinity.
An affinity of $\mathbb R$ has the form $\varpi\colon x\mapsto ax+b$,
where $a,b\in\mathbb R$.
If $a\ne1$, then $\frac{b}{1-a}$ is a fixpoint of $\varpi$,
therefore, we have $\varpi\colon x\mapsto x+b$.
Thus we have
$m(x,y)=m(x+b,y+b)$ for every $x,y\in\mathbb R$ and $b\in\mathbb R$.

Choosing $b=-x$, we obtain $m(x,y)=m(0,y-x)$ for every $x,y\in\mathbb R$,
from which $m(0,y-x)+m(0,z-y)=m(x,y)+m(y,z)=m(x,z)=m(0,z-x)$,
hence $m(0,s)+m(0,t)=m(0,s+t)$ follows for every $s,t\in\mathbb R_+$.
Let $g(u)=m(0,u)$ for every $u\in\mathbb R_+$.
Then $g(p)+g(q)=g(p+q)$ for every $p,q\in\mathbb R_+$,
hence, by known properties of Cauchy's functional equation \cite{WP-CauchyEqu},
$g(u)=c_+u$ follows for some $c_+\in\mathbb R$ and every $u\in\mathbb R_+$.
Thus, $m(x,y)=m(0,y-x)=g(y-x)=c_+(y-x)$ for every $x\le y$.

Now choose $b=-y$ to obtain $m(x,y)=m(x-y,0)$ for every $x,y\in\mathbb R$,
implying $m(x-y,0)+m(y-z,0)=m(x,y)+m(y,z)=m(x,z)=m(x-z,0)$,
hence $m(s,0)+m(t,0)=m(s+t,0)$ for every $s,t\in\mathbb R_-$.
Let $g(u)=m(u,0)$ for every $u\in\mathbb R_-$.
Then $g(p)+g(q)=g(p+q)$ for every $p,q\in\mathbb R_-$,
hence, by known properties of Cauchy's functional equation \cite{WP-CauchyEqu},
$g(u)=c_-u$ follows for some $c_-\in\mathbb R$ and every $u\in\mathbb R_-$.
Thus, $m(x,y)=m(y-x,0)=g(x-y)=c_-(x-y)$ for every $y\le x$.
\end{proof}


\subsection{Straight projective-metric spaces}
The following two (most) important examples
are distinguished among the straight projective-metric spaces
by the property that an isometry of one geodesic on another
or itself is a projectivity.

\subsubsection{Minkowski geometry}

Given an open, strictly convex, bounded domain  $\mathcal I\subset\mathbb R^n$,
the \emph{indicatrix},
that is (centrally) symmetric to the origin,
the function $d_{\mathcal I}\colon\mathbb R^n\times\mathbb R^n\to\mathbb R$
defined by
\begin{equation*} 
 d_{\mathcal I}(\Bf x ,\Bf y )
 =\inf\{\lambda>0:(\Bf y-\Bf x)/\lambda\in\mathcal I\}
\end{equation*}
is a metric on $\mathbb R^n$ \cite[VI.48]{BusemannKelly1953},
and is called \emph{Minkowski metric} on $\mathbb R^n$.
The projective-metric spaces of type $(\mathbb R^n,d_{\mathcal I})$
are all called \emph{Minkowski geometry}.
It is the Euclidean geometry
if and only if $\mathcal I$ is an ellipsoid \cite[(48.7)]{BusemannKelly1953}.



\subsubsection{Hilbert geometry}

Given an open, strictly convex, bounded domain  $\mathcal I\subset\mathbb R^n$,
that does not contain two coplanar non-collinear segments,
the function $d_{\mathcal I}\colon\mathcal I\times\mathcal I\to\mathbb R$
defined by
\begin{equation*}
 d_{\mathcal I}(A,B)
 =\begin{cases}
   0,&\text{if $A=B$},\\
   \frac{1}{2}\bigl|\ln(A,B;C,D)\bigr|,
     &\text{if $A\ne B$, where $\overline{CD}=\mathcal I\cap AB$},
  \end{cases}
\end{equation*}
is a projective metric on $\mathcal I$ \cite[VI.50]{BusemannKelly1953},
and is called the \emph{Hilbert metric} on $\mathcal I$.
The projective-metric space $(\mathcal I,d_{\mathcal I})$ is called \emph{Hilbert geometry}
given in $\mathcal I$.
It is the hyperbolic geometry
if and only if $\mathcal I$ is an ellipsoid \cite[(50.2)]{BusemannKelly1953}.




\subsection{Isometries and metric point reflections}
Although some of the statements here are valid more generally,
we confine ourselves here to straight projective-metric spaces $(\mathcal M,d)$.

An isometry  
keeps the geodesic lines,
therefore, it is the restriction of a collineation \cite[Theorem 3.1.]{CCMECallum2007}.
A collineation is,
by Staudt's theorem \cite[(ii) Fundamental theorem of projective geometry, p.~30]{Hirschfeld1979},
a projective map of $\mathbb P^n$, so we obtain that
\begin{equation}\label{obs:isomProj} 
 \parbox{100mm}{\noindent\itshape%
 isometries 
 are restrictions of projective maps. 
 }
\end{equation}
If there exists a metric point reflection $\rho^{}_{d;O}$,
then 
$O$ is the metric midpoint of the geodesic segment
$\overline{P\rho^{}_{d;O}P}\subset\widetilde{P\rho^{}_{d;O}P}$
for any point $P\ne O$, hence we have that
\begin{equation}\label{obs:preflUniq} 
 \parbox{100mm}{\noindent\itshape%
 there is at most one metric point reflection at every point.
 }
\end{equation}
The easy formal proof of the following statement is left to the reader.
\begin{equation}\label{obs:CentMapsCent} 
 \parbox{100mm}{\noindent\itshape%
 If $O$ is a center,
 and $\varpi$ is a projective map of $\mathbb P^n$,
 then $\varpi(O)$ is a center of the projective-metric space
 $(\varpi(\mathcal M),d_\varpi)$,
 where $d_\varpi(\varpi(P),\varpi(Q))=d(P,Q)$
 for every point $P,Q\in\mathcal M$.
 }
\end{equation}


\begin{lemma}\label{lem:genCent} 
 We have
   $\rho^{}_{d;\rho^{}_{d;O}Q}=\rho^{}_{d;O}\circ\rho^{}_{d;Q}\circ\rho^{}_{d;O}$.
\end{lemma}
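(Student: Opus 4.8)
The plan is to lean entirely on the uniqueness statement~\eqref{obs:preflUniq}: rather than constructing a reflection at the point $P:=\rho^{}_{d;O}(Q)$ from scratch, I would take the conjugate map $\sigma:=\rho^{}_{d;O}\circ\rho^{}_{d;Q}\circ\rho^{}_{d;O}$ and verify that it satisfies all the defining properties of a metric point reflection fixing $P$. Once that is done, \eqref{obs:preflUniq} immediately forces $\sigma=\rho^{}_{d;P}=\rho^{}_{d;\rho^{}_{d;O}Q}$, which is exactly the asserted identity. So the work reduces to checking the four defining properties of $\sigma$ in turn.

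First, $\sigma$ is a $d$-isometry of $\mathcal M$ onto itself, being a composition of the $d$-isometries $\rho^{}_{d;O}$ and $\rho^{}_{d;Q}$. It is involutive: since both factors are involutions, in $\sigma^2$ the inner pair $\rho^{}_{d;O}\circ\rho^{}_{d;O}$ collapses to the identity, leaving $\rho^{}_{d;O}\circ\rho^{}_{d;Q}\circ\rho^{}_{d;Q}\circ\rho^{}_{d;O}=\mathrm{id}$. It is non-identical, for if $\sigma=\mathrm{id}$, then conjugating back by the involution $\rho^{}_{d;O}$ would give $\rho^{}_{d;Q}=\mathrm{id}$, contradicting that $\rho^{}_{d;Q}$ is a (non-identical) metric point reflection. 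Finally, using $\rho^{}_{d;O}\circ\rho^{}_{d;O}=\mathrm{id}$ and $\rho^{}_{d;Q}(Q)=Q$, a direct computation gives $\sigma(P)=\rho^{}_{d;O}\rho^{}_{d;Q}\rho^{}_{d;O}\rho^{}_{d;O}(Q)=\rho^{}_{d;O}\rho^{}_{d;Q}(Q)=\rho^{}_{d;O}(Q)=P$, so $\sigma$ fixes $P$.

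The last—and only slightly delicate—step is to show that $\sigma$ keeps every geodesic line through $P$ invariant. Here I would use that an isometry carries geodesic lines to geodesic lines, which follows from \eqref{obs:isomProj} since it is the restriction of a projective map. Given a geodesic $\tilde\ell\ni P$, its image $\rho^{}_{d;O}(\tilde\ell)$ is a geodesic line passing through $\rho^{}_{d;O}(P)=Q$; the defining property of $\rho^{}_{d;Q}$ then fixes $\rho^{}_{d;O}(\tilde\ell)$ setwise, and a final application of the involution $\rho^{}_{d;O}$ returns $\tilde\ell$. Hence $\sigma(\tilde\ell)=\tilde\ell$, so $\sigma$ keeps every geodesic through $P$.

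With all four properties verified, $\sigma$ is a metric point reflection at $P$, and \eqref{obs:preflUniq} yields the claim. The main thing to get right is the geodesic-invariance step, namely carefully tracking through which point each geodesic passes after each of the three reflections; the remaining verifications are purely formal manipulations of involutivity and the fixed-point conditions.
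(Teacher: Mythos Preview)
Your proof is correct and follows essentially the same route as the paper: define the conjugate map, verify it is a non-identical involutive isometry fixing $\rho^{}_{d;O}(Q)$ and preserving every geodesic through that point, then invoke \eqref{obs:preflUniq}. The only cosmetic difference is that the paper tracks the geodesic-invariance step via two explicit points $A',B'$ on the geodesic rather than appealing to \eqref{obs:isomProj}, but the underlying argument is identical.
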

\begin{proof}
The map $\imath^{}_{}:=\rho^{}_{d;O}\circ\rho^{}_{d;Q}\circ\rho^{}_{d;O}$
is clearly a non-trivial isometry,
and fixes point $Q':=\rho^{}_{d;O}Q$, because
$\imath^{}_{}Q'=\imath^{}_{}\rho^{}_{d;O}Q=\rho^{}_{d;O}\rho^{}_{d;Q}Q=\rho^{}_{d;O}Q=Q'$.
Further, it satisfies
$\imath^2=(\rho^{}_{d;O}\circ\rho^{}_{d;Q}\circ\rho^{}_{d;O})\circ
(\rho^{}_{d;O}\circ\rho^{}_{d;Q}\circ\rho^{}_{d;O})=\mathop{\rm id}
$.

Assume that points $A',B'\in\mathcal M$ are such that $Q'\in\widetilde{A'B'}$.
Let $A=\rho^{}_{d;O}A'$ and $B=\rho^{}_{d;O}B'$.
Then
$Q=\rho^{}_{d;O}Q'\in\rho^{}_{d;O}\big(\widetilde{A'B'})
 =\widetilde{AB}$,
hence
$\imath^{}_{}\big(\widetilde{A'B'}\big)=\widetilde{\imath^{}_{}A'\imath^{}_{}B'}
 =\rho^{}_{d;O}\circ\rho^{}_{d;Q}\big(\widetilde{AB}\big)
 =\rho^{}_{d;O}\big(\widetilde{BA}\big)=\widetilde{B'A'}$,
i.e.\ $\imath^{}_{}$ keeps every geodesic passing through~$Q'$.

Thus, $\imath^{}_{}$ is non-trivial, isometric, fixes $Q'$, involutive,
and keeps the geodesic lines passing through $Q'$,
therefore, by \eqref{obs:preflUniq}, it is the metric point reflection $\rho_{d;Q'}$.
\end{proof}

\begin{lemma}\label{lem:PRSclosed}
 The set of the centers 
 is closed.
\end{lemma}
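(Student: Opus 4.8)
The plan is to take a sequence of centers $O_k\to O$ with $O\in\mathcal M$ and to exhibit the metric point reflection at $O$ as a pointwise limit of the reflections $\rho^{}_{d;O_k}$. The starting observation is the characterization implicit in the justification of \eqref{obs:preflUniq}: since $\rho^{}_{d;O_k}$ is an involutive isometry fixing $O_k$ and keeping each geodesic through $O_k$, the image $\rho^{}_{d;O_k}(P)$ is, for every $P\ne O_k$, the unique point of the geodesic $\widetilde{O_kP}$ lying on the opposite side of $O_k$ from $P$ with $d(O_k,\rho^{}_{d;O_k}(P))=d(O_k,P)$; equivalently, $O_k$ is the midpoint of $\overline{P\,\rho^{}_{d;O_k}(P)}$.

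Next I would define the candidate map $\rho$ at $O$ by exactly this recipe: for $P\ne O$ let $\rho(P)$ be the unique point of $\widetilde{OP}$ with $O$ strictly between $P$ and $\rho(P)$ and $d(O,\rho(P))=d(O,P)$, and set $\rho(O)=O$. This is well defined because in a straight projective-metric space $\widetilde{OP}$ is isometric to $\mathbb R$ and the points $O,P$ determine a unique geodesic. From the definition, $\rho$ is non-identical, fixes $O$, is involutive, and maps each geodesic through $O$ onto itself.

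The heart of the argument is to show $\rho^{}_{d;O_k}(P)\to\rho(P)$ for every fixed $P$. The lines $O_kP$ converge to $OP$ in $\mathbb P^n$ and, since $d$ is continuous, $d(O_k,P)\to d(O,P)$; because the space is finitely compact (it is a $G$-space), the points $\rho^{}_{d;O_k}(P)$, all lying in a bounded $d$-ball about $O$, have subsequential limits in $\mathcal M$. Any such limit $R$ lies on $OP$, has $O$ between $P$ and $R$, and satisfies $d(O,R)=d(O,P)$; hence $R=\rho(P)$, and as all subsequential limits coincide, the whole sequence converges to $\rho(P)$. Passing to the limit in the identity $d(\rho^{}_{d;O_k}(P),\rho^{}_{d;O_k}(Q))=d(P,Q)$ and using continuity of $d$ yields $d(\rho(P),\rho(Q))=d(P,Q)$, so $\rho$ is an isometry of $\mathcal M$. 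Thus $\rho$ is a non-identical, involutive isometry fixing $O$ and keeping every geodesic through $O$, i.e.\ a metric point reflection, and $O$ is a center.

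The main obstacle is precisely this convergence step: one must rule out that the images $\rho^{}_{d;O_k}(P)$ escape towards $\partial\mathcal M$ or to infinity rather than settling on the interior point $\rho(P)$. This is where finite compactness is essential (equivalently, the fact that a point at finite $d$-distance from $O$ is an interior point of a complete geodesic); combined with the continuity of $d$ it pins every subsequential limit to the single point $\rho(P)\in\mathcal M$. Everything else — non-triviality, involutivity, and the geodesic-preserving property — is immediate from the midpoint characterization.
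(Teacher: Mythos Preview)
Your argument is correct and follows essentially the same route as the paper's proof: take a convergent sequence of centers, show the reflected images of a fixed point form a bounded sequence, identify every subsequential limit via the midpoint characterization, and conclude. The only organizational difference is that you first define the candidate reflection $\rho$ at $O$ and then prove $\rho_{d;O_k}(P)\to\rho(P)$, whereas the paper first extracts a congestion point $P_\infty$ and then recognizes it as $\rho_{d;O_\infty}(P)$; your version has the virtue of making the isometry verification $d(\rho(P),\rho(Q))=d(P,Q)$ explicit, which the paper leaves implicit in the phrase ``hence the Lemma.''
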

\begin{proof}
 Let $O_n$ be a sequence of centers of the projective-metric space $(\mathcal M,d)$
 converging to $O_\infty$.
 Then we have the sequence of points $P_n=\rho^{}_{d;O_n}(P)$ for any point $P\in\mathcal M$.
 
 From $d(P_n,O_n)=d(P,O_n)$, $O_n\to O_\infty$ and the triangle inequality
 it follows that
 $d(P_n,O_\infty)\le d(P_n,O_n)+d(O_n,O_\infty)=d(P,O_n)+d(O_n,O_\infty)
  \le d(P,O_\infty)+d(O_\infty,O_n)+d(O_n,O_\infty)\le d(P,O_\infty)+\varepsilon$
 for any $\varepsilon>0$ if $n\in\mathbb N$ is big enough.
 Thus, the sequence of points $P_n$ is bounded,
 hence it has congestion points.
 
 If $P_\infty$ is a congestion point of $P_n$,
 then
 \[
 d(P,O_\infty)+d(O_\infty,P_\infty)=\lim_{n\to\infty}(d(P,O_n)+d(O_n,P_n))
 =\lim_{n\to\infty}d(P,P_n)=d(P,P_\infty)
 \]
 proves that $O_\infty\in\widetilde{PP_\infty}$,
 and 
 $ 
 d(P,O_\infty)=\lim_{n\to\infty}(d(P,O_n)=\lim_{n\to\infty}(d(P_n,O_n)=d(P_\infty,O_\infty)
 $ 
 proves that $O_\infty$ is the metric midpoint of the segment $\overline{PP_\infty}$.
 Thus, $P_\infty=\rho^{}_{d;O_\infty}(P)$, hence the Lemma.
\end{proof}

Two point reflections define an isometry defined by
$\tau^{}_{PQ}:=\rho^{}_{d;P}\circ\rho^{}_{d;Q}$.
We call such isometries \emph{translations}.

\begin{lemma}\label{lem:segments}
  For any three collinear points $O,P,Q$
  \begin{enumerate}
   \item\label{enu:inIFFin}
    $O\in\overline{PQ}$ if and only if $O\in\overline{\rho^{}_{d;P}(O)\rho^{}_{d;Q}(O)}$,
    and
   \item\label{enu:transLength}
    $d(\tau^{}_{PQ}(O),O)=2d(P,Q)$.
  \end{enumerate}
\end{lemma}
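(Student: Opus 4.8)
The plan is to reduce everything to a single affine line and exploit an arclength parametrization of the geodesic carrying $O,P,Q$. Since these three points are collinear, let $\ell$ be the projective line through them and let $\tilde\ell=\ell\cap\mathcal M$ be the corresponding geodesic. Because the space is straight, $\tilde\ell$ is isometric to a Euclidean line, so I would fix an isometry $\gamma\colon\mathbb R\to\tilde\ell$. Being a continuous bijection onto a connected subset of the affine line $\ell$, the map $\gamma$ is strictly monotone with respect to the affine order; consequently $d(\gamma(s),\gamma(t))=|s-t|$, and affine betweenness on $\tilde\ell$ coincides with the usual order of the parameters (hence also with metric betweenness). I would normalize so that $O=\gamma(0)$, $P=\gamma(p)$ and $Q=\gamma(q)$.

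Next I would determine the action of the reflections on $\tilde\ell$. The reflection $\rho_{d;P}$ maps $\tilde\ell$ onto itself, since $\tilde\ell$ passes through $P$, so its restriction is an involutive isometry of $\tilde\ell\cong\mathbb R$ fixing the parameter $p$. This restriction is not the identity, because $P$ is the metric midpoint of $\overline{O\rho_{d;P}(O)}$ (the midpoint property behind \eqref{obs:preflUniq}), whence $\rho_{d;P}(O)\ne O$ whenever $O\ne P$. As the only involutive, non-identity isometry of $\mathbb R$ fixing $p$ is $t\mapsto 2p-t$, I conclude $\rho_{d;P}(O)=\gamma(2p)$, and symmetrically $\rho_{d;Q}(O)=\gamma(2q)$. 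Applying $\rho_{d;P}$ once more gives $\tau_{PQ}(O)=\rho_{d;P}(\rho_{d;Q}(O))=\gamma(2p-2q)$, and all three image points lie on $\tilde\ell$.

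With these parameters both claims become immediate. For \eqref{enu:inIFFin}, the point $O=\gamma(0)$ lies in the open segment $\overline{PQ}$ exactly when $0$ lies strictly between $p$ and $q$, i.e. when $pq<0$; the same inequality characterizes $0$ lying strictly between $2p$ and $2q$, i.e. $O\in\overline{\rho_{d;P}(O)\rho_{d;Q}(O)}$. For \eqref{enu:transLength}, $d(\tau_{PQ}(O),O)=|(2p-2q)-0|=2|p-q|=2d(P,Q)$. The parameter computation is valid with no distinctness assumption, so degenerate configurations (such as $P=Q$, where both open segments in \eqref{enu:inIFFin} are empty and \eqref{enu:transLength} reduces to $0=0$) need no separate treatment.

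The only step that is more than bookkeeping is identifying the restriction of a metric point reflection to a geodesic through its center with the Euclidean reflection $t\mapsto 2p-t$. This rests on the straightness hypothesis, which guarantees that each geodesic is isometric to $\mathbb R$, together with the classification of involutive isometries of $\mathbb R$; once that identification is secured, the rest is a direct reading-off of parameters.
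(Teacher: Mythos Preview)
Your argument is correct. The paper proceeds differently: for \ref{enu:inIFFin} it simply notes that $P$ and $\rho^{}_{d;P}(O)$ lie on the same side of $O$ (and likewise for $Q$), while for \ref{enu:transLength} it first rewrites $d(\tau^{}_{PQ}(O),O)=d(\rho^{}_{d;Q}(O),\rho^{}_{d;P}(O))$ via the involutive isometry property and then splits into the three cases $O\in\overline{PQ}$, $P\in\overline{OQ}$, $Q\in\overline{OP}$, adding or subtracting the doubled distances $2d(O,P)$ and $2d(O,Q)$ accordingly. Your arclength parametrization collapses this case split into the single identity $|2p-2q|=2|p-q|$, at the cost of first pinning down the restriction of $\rho^{}_{d;P}$ to $\tilde\ell$ as $t\mapsto 2p-t$; that identification is exactly the device the paper itself uses later (in Lemma~\ref{lem:CCTrans} and Lemma~\ref{lem:irratPRdense}), so your route is in fact more in keeping with the rest of the argument and handles the degenerate configurations uniformly.
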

\begin{proof}
To prove \ref{enu:inIFFin} we need only to observe that
the points $P$ and $Q$ are on the same side of $O$ as
the points $\rho^{}_{d;P}(O)$ and $\rho^{}_{d;Q}(O)$,
respectively.

For \ref{enu:transLength} we let
$\delta:=d(\tau^{}_{PQ}(O),O)=d(\rho^{}_{d;P}(\rho^{}_{d;Q}(O)),O)
=d(\rho^{}_{d;Q}(O),\rho^{}_{d;P}(O))$,
and consider three cases:
\begin{enumerate}[label={\upshape(\alph*)}]
 \item
  if $O\in\overline{PQ}$, then $O\in\overline{\rho^{}_{d;P}(O)\rho^{}_{d;Q}(O)}$,
  hence\\
  $\delta
   =d(\rho^{}_{d;Q}(O),O)+d(O,\rho^{}_{d;P}(O))
   =2d(Q,O)+d(O,P)=2d(P,Q)$;
 \item
  if $P\in\overline{OQ}$, then $\rho^{}_{d;P}(O)\in\overline{O\rho^{}_{d;Q}(O)}$,
  hence\\
  $\delta
     =d(\rho^{}_{d;Q}(O),O)-d(O,\rho^{}_{d;P}(O))
     =2d(Q,O)-d(O,P)=2d(P,Q)$;
 \item
  if $Q\in\overline{OP}$, then $\rho^{}_{d;Q}(O)\in\overline{O\rho^{}_{d;P}(O)}$,
    hence\\
  $\delta
     =d(\rho^{}_{d;P}(O),O)-d(O,\rho^{}_{d;Q}(O))
     =2d(P,O)-d(O,Q)=2d(P,Q)$.\qedhere
\end{enumerate}

\end{proof}

\begin{lemma}\label{lem:CCTrans}
 Assume that every point of the geodesic line $\tilde\ell$ is a center.
 Then every isometry of $\tilde\ell$ is a restriction 
 of $\tau^{}_{PQ}$ or $\rho_{d;P}$,
 where $P,Q$ are any points on $\tilde\ell$.
\end{lemma}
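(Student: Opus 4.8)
The plan is to exploit that, by the very definition of a straight projective-metric space, the geodesic line $\tilde\ell$ carries a metric making it isometric to the Euclidean line $\mathbb R$; so I would fix an isometric parametrization $\phi\colon\mathbb R\to\tilde\ell$. Every self-isometry of $(\tilde\ell,d)$ then corresponds, via $\phi$, to a self-isometry of $\mathbb R$, and these are exactly the translations $x\mapsto x+s$ and the reflections $x\mapsto 2c-x$. Hence it suffices to realize each reflection of $\tilde\ell$ as a restriction of some $\rho^{}_{d;P}$ and each translation as a restriction of some $\tau^{}_{PQ}$ with $P,Q\in\tilde\ell$, and the statement follows from the elementary structure of the isometry group of $\mathbb R$.

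First I would check that for any $P\in\tilde\ell$ the restriction $\rho^{}_{d;P}|_{\tilde\ell}$ is the reflection of $\tilde\ell$ in $P$. Since $P\in\tilde\ell$ and $\rho^{}_{d;P}$ keeps every geodesic passing through $P$, it maps $\tilde\ell$ onto itself, so $\rho^{}_{d;P}|_{\tilde\ell}$ is an involutive self-isometry of $\tilde\ell$ fixing $P$. Transported by $\phi$, it becomes an involutive isometry of $\mathbb R$ fixing $\phi^{-1}(P)$, hence it is either the identity or the reflection in $\phi^{-1}(P)$. The identity is excluded because, as recorded just before \eqref{obs:preflUniq}, $P$ is the metric midpoint of $\overline{X\rho^{}_{d;P}(X)}$ for every $X\ne P$, which forces $\rho^{}_{d;P}(X)\ne X$. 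Therefore $\rho^{}_{d;P}|_{\tilde\ell}$ is the reflection in $P$, and this disposes of all orientation-reversing isometries of $\tilde\ell$: given the reflection in a parameter value $c$, I take $P=\phi(c)$, which is a center by hypothesis, and $\rho^{}_{d;P}|_{\tilde\ell}$ is precisely the required map.

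It remains to treat the translations. As $\tau^{}_{PQ}=\rho^{}_{d;P}\circ\rho^{}_{d;Q}$ with $P,Q\in\tilde\ell$, its restriction to $\tilde\ell$ is the composition of the reflection in $Q$ followed by the reflection in $P$, which in the parameter $x$ is $x\mapsto x+2(\phi^{-1}(P)-\phi^{-1}(Q))$, a translation by twice the signed distance from $Q$ to $P$; this is consistent with Lemma~\ref{lem:segments}\ref{enu:transLength}. To realize an arbitrary translation $x\mapsto x+s$ of $\tilde\ell$, I would pick any $Q\in\tilde\ell$ and set $P=\phi(\phi^{-1}(Q)+s/2)$, both of which are centers by hypothesis; then $\tau^{}_{PQ}|_{\tilde\ell}$ is exactly this translation (the identity arising for $P=Q$). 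Since every self-isometry of $\tilde\ell$ is a translation or a reflection, this completes the argument.

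The only genuinely delicate point is the one in the second step: one must guarantee that $\rho^{}_{d;P}$ does not collapse to the identity on $\tilde\ell$, and this is exactly where the midpoint property of metric point reflections is indispensable. Everything else is the classification of $\mathrm{Isom}(\mathbb R)$ together with straightforward bookkeeping of orientations and signed lengths.
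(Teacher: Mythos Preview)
Your proof is correct and follows essentially the same route as the paper: transport the problem to $\mathbb R$ via the isometric parametrization, use the classification of isometries of $\mathbb R$ as translations and reflections, and then identify each with the restriction of some $\tau^{}_{PQ}$ or $\rho^{}_{d;P}$ using the midpoint property and Lemma~\ref{lem:segments}\ref{enu:transLength}. The only cosmetic difference is that the paper starts from an arbitrary isometry $\jmath$ and identifies it, whereas you first compute what $\rho^{}_{d;P}|_{\tilde\ell}$ and $\tau^{}_{PQ}|_{\tilde\ell}$ are and then match---but the content is the same.
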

\begin{proof}
By definition we have an isometry $\imath\colon\tilde\ell\to\mathbb R$.

If $\jmath$ is an isometry on $\tilde\ell$,
then $\imath\circ\jmath\circ\imath^{-1}$ is an isometry on $\mathbb R$.
Every isometry on $\mathbb R$ has the form of
either $x\mapsto a+x$ or $x\mapsto a-x$ for some $a\in\mathbb R$,
so we have for a fixed $a\in\mathbb R$
either $\imath(\jmath(\imath^{-1}(x)))=a+x$ or $\imath(\jmath(\imath^{-1}(x)))=a-x$
for every $x\in\mathbb R$.

Thus, every isometry $\jmath$ on $\tilde\ell$ is 
either $\jmath(P)=\imath^{-1}(a+\imath(P))$ or $\jmath(P)=\imath^{-1}(a-\imath(P))$,
for some $a\in\mathbb R$.

If $\jmath(\cdot)=\imath^{-1}(a+\imath(\cdot))$,
then
$
 d(\jmath(P),P)
 =|\imath(\jmath(P))-\imath(P)|
 =|a+\imath(P)-\imath(P)|=a
$,
hence, by Lemma~\ref{lem:segments}\ref{enu:transLength},
$\jmath=\tau^{}_{QR}$, where $Q,R\in\tilde\ell$ and $d(Q,R)=a/2$.

If $\jmath(\cdot)=\imath^{-1}(a-\imath(\cdot))$,
then
we have a point $O\in\tilde\ell$ such that $\imath(O)=a/2$,
and 
$
 d(\jmath(P),O)
 =|\imath(\jmath(P))-\imath(O)|
 =|a/2-\imath(P)|
 =|\imath(P)-\imath(O)|
 =d(P,O)
$,
as well as
$
 d(\jmath(P),P)
 =|\imath(\jmath(P))-\imath(P)|
 =2|a/2-\imath(P)|
 =2d(P,O)
$,
hence, by \eqref{obs:preflUniq}, $\jmath=\rho^{}_{O}$.
\end{proof}

\section{Open set of centers}

Firstly, we note the well-known fact that
\begin{equation}\label{equ:symmSPMS}
 \parbox{100mm}{\noindent\itshape%
 Minkowski geometries and the hyperbolic geometry are symmetric.
 }
\end{equation}


\begin{theorem}\label{thm:PPMcent}
 The set of the centers of a projective-metric space of parabolic type
 contains a non-empty open set of centers
 if and only if it is Minkowskian geometry.
\end{theorem}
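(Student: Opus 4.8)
\emph{Plan.} The easy direction is immediate: every Minkowski geometry is symmetric by \eqref{equ:symmSPMS}, so every point of $\mathbb R^n$ is a center and the whole space is a (non-empty open) set of centers. For the converse, suppose $U\subseteq\mathbb R^n$ is a non-empty open set of centers. My first and decisive step is to identify the metric point reflections as the affine ones. Fix a center $O$. By \eqref{obs:isomProj} the isometry $\rho^{}_{d;O}$ is the restriction of a projective map of $\mathbb P^n$; since by definition it maps $\mathcal M=\mathbb R^n$ onto itself, it maps the ideal hyperplane onto itself as well, hence it is an affinity. It fixes $O$, so, placing $O$ at the origin, it becomes linear; and because it keeps every line through $O$ invariant, every vector is an eigenvector, so it is a scalar multiple $\lambda\cdot\mathrm{id}$ of the identity. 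Being a non-trivial involution forces $\lambda=-1$, whence $\rho^{}_{d;O}=\bar\rho^{}_{O}$, the affine point reflection at $O$.

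The second step propagates this to all translations. For $P,Q\in U$ the translation $\tau^{}_{PQ}=\rho^{}_{d;P}\circ\rho^{}_{d;Q}=\bar\rho^{}_{P}\circ\bar\rho^{}_{Q}$ is the ordinary affine translation by $2\overrightarrow{QP}$, and it is a $d$-isometry. Fixing $Q=O_0\in U$ and letting $P$ run over a small ball around $O_0$ contained in $U$, the vectors $2\overrightarrow{O_0P}$ fill a neighborhood $B(\Bf 0,2\varepsilon)$ of the origin. Now the set $T=\{\Bf v\in\mathbb R^n:\ \Bf x\mapsto\Bf x+\Bf v\text{ is a }d\text{-isometry}\}$ is a subgroup of $(\mathbb R^n,+)$, because translations compose additively and invert; since it contains a neighborhood of $\Bf 0$, it must be all of $\mathbb R^n$. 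Hence $d$ is translation invariant, $d(\Bf x,\Bf y)=d(\Bf 0,\Bf y-\Bf x)$ for all $\Bf x,\Bf y$.

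The final step reads off the indicatrix. Put $\|\Bf v\|:=d(\Bf 0,\Bf v)$. Positive definiteness is the metric axiom, the central symmetry $\|{-\Bf v}\|=\|\Bf v\|$ follows from translation invariance together with the symmetry of $d$, and the triangle inequality gives subadditivity. On the line through $\Bf 0$ and $\Bf v$ the restriction of $d$ is continuous, symmetric, additive along geodesics, and invariant under the translations of that line, which are exactly its fixed-point-free projectivities; so Lemma~\ref{lem:PMonsline} applies and yields $d(\Bf 0,t\Bf v)=t\|\Bf v\|$ for $t\ge0$, i.e.\ positive homogeneity. Thus $\|\cdot\|$ is a norm, and with $\mathcal I=\{\Bf v:\|\Bf v\|<1\}$ — an open, bounded, centrally symmetric convex body — we obtain $d=d_{\mathcal I}$; strict convexity of $\mathcal I$ is forced by the defining requirement that the straight segments be the unique geodesics of the projective metric, so $(\mathbb R^n,d)$ is a Minkowski geometry.

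I expect the first step to be the main obstacle, since everything afterwards rests on the translations $\tau^{}_{PQ}$ being genuine affine translations: the scalar-eigenvector argument, together with the use of the parabolic hypothesis $\mathcal M=\mathbb R^n$ (which is what guarantees that $\rho^{}_{d;O}$ preserves $\mathbb R^n$ and hence the ideal hyperplane), is exactly what makes $\rho^{}_{d;O}=\bar\rho^{}_{O}$.
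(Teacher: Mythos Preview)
Your proof is correct and takes a genuinely different route from the paper's. Both begin by showing that a metric point reflection $\rho^{}_{d;O}$ in the parabolic case is the affine point reflection $\bar\rho^{}_O$, but the arguments differ: the paper gives a synthetic parallelogram argument (first showing that $\rho^{}_{d;O}$ maps each line to a parallel one, then concluding that $O$ is the affine midpoint of every segment $\overline{A\rho^{}_{d;O}(A)}$), whereas you use the observation that an affinity fixing $O$ and preserving every line through $O$ is $\lambda\cdot\mathrm{id}$, with $\lambda=-1$ forced by involutivity --- a slicker step that leans more directly on \eqref{obs:isomProj}. After that, the paper invokes Lemma~\ref{lem:genCent} to pass from an open set of centers to the symmetric case and finishes by observing that metric midpoints coincide with affine midpoints, quoting \cite[(17.9)]{Busemann1955}. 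You instead bypass the reduction to full symmetry: from the open set of centers you extract a neighborhood of $\Bf 0$ inside the additive group of $d$-isometric translations, deduce translation invariance of $d$ directly, and then build the Minkowski norm from scratch, using Lemma~\ref{lem:PMonsline} for positive homogeneity and checking strict convexity of the unit ball from the uniqueness of geodesics. Your argument is thus more self-contained (no external citation for the final step), while the paper's is shorter once one is willing to quote Busemann.
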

\begin{proof}
By \eqref{equ:symmSPMS} and Lemma~\ref{lem:genCent},
we only need to prove that
if every point of a projective-metric space of parabolic type is a center,
then it is a Minkowskian geometry.

First, we prove that
\begin{equation}\label{equ:PMCisAC}
 \parbox{100mm}{\noindent\itshape%
  if $O$ is a center of a projective-metric space of parabolic type,
  then the metric point reflection $\rho^{}_O$ is the
  affine point reflection $\bar\rho^{}_O$.
 }
\end{equation}

Let the straight line $\ell$ avoid $O$ and let $\ell'=\rho^{}_O(\ell)$.
As $\rho^{}_O$ keeps the straight lines containing $O$,
every straight line $l$ through $O$ and a point $P$ of $\ell$
coincides $\rho^{}_Ol$.
As all these lines are in the common plane $\mathbb R^2_{O,\ell}$ of $O$ and $\ell$,
we conclude that $\ell$ and $\ell'$ are in $\mathbb R^2_{O,\ell}$.

Assume that $\ell$ intersects $\ell'$,
i.e. there is a point $P$ in $\ell\cap\ell'$. 
Then $\rho^{}_O(P)$ is also in $\ell\cap\ell'$ and is different from $P$
as $O$ is the metric midpoint of the segment $\overline{P\rho^{}_O(P)}$,
and $d(O,P)>0$.
Thus, $\ell$ and $\ell'$ have two different common points,
hence $\ell\equiv\ell'$.
This is a contradiction as $O\in\overline{P\rho^{}_O(P)}\subset\ell$,
but $O\notin\ell$.
Thus, $\ell$ does not intersect $\ell'$, that,
as these straight lines are in their common plane $\mathbb R^2_{O,\ell}$,
implies that $\ell\parallel\ell'$.
So, $\rho^{}_O$ maps every straight line into a parallel straight line.

Let $O$ and $A$ be arbitrary different points.
Let $B$ be any point outside their common straight line. 
By the above observation $AB\parallel\rho^{}_O(A)\rho^{}_O(B)$
and $A\rho^{}_O(B)\parallel\rho^{}_O(A)B$,
hence quadrangle $\mathcal P:=AB\rho^{}_O(A)\rho^{}_O(B)\Box$ is a parallelogram.
As $O$ is the intersection of the diagonals of $\mathcal P\,$,
it follows that $O$ is the affine midpoint of the segments
$\overline{A\rho^{}_O(A)}$. This proves~\eqref{equ:PMCisAC}.

Let $A$ and $B$ be arbitrary different points,
and let $O$ be the $d$-metric midpoint of segment $\overline{AB}$.
Then $\rho^{}_O(A)=B$, and by \eqref{equ:PMCisAC},
$O$ is the affine midpoint of $\overline{AB}$ too.

Thus,
the affine midpoint and the $d$-metric midpoint of any segment coincide
which, by \cite[(17.9)]{Busemann1955}, implies that $d$ is a Minkowskian metric.
\end{proof}


\begin{theorem}\label{thm:HPMcent}
 The set of the centers of a projective-metric space of hyperbolic type
 contains a non-empty open set
 if and only if it is the hyperbolic geometry.
\end{theorem}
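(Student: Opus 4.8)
The plan is to follow the pattern of the proof of Theorem~\ref{thm:PPMcent}. The \textqq{if} direction is \eqref{equ:symmSPMS}, so I concentrate on the converse. As in the parabolic case, Lemma~\ref{lem:genCent} shows that each $\rho^{}_{d;O}$ sends centers to centers; hence the reflections based in a nonempty open set of centers generate a group acting on $\mathcal{D}$ with open orbits, and by the connectedness of $\mathcal{D}$ (or, alternatively, by the closedness supplied by Lemma~\ref{lem:PRSclosed}) every point becomes a center. Thus it suffices to prove that if every point of the bounded convex domain $\mathcal{D}\subseteq\mathbb{R}^n$ is a center, then $(\mathcal{D},d)$ is the hyperbolic geometry.

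The decisive step is to read off each $\rho^{}_{d;O}$ projectively. By \eqref{obs:isomProj} it is the restriction of a projectivity of $\mathbb{P}^n$ that fixes $O$ and preserves $\mathcal{D}$. On a line $\ell\ni O$, writing $\tilde\ell=(A,B)$ with $A,B\in\partial\mathcal{D}$, the map $\rho^{}_{d;O}$ is an involutive isometry of a line isometric to $\mathbb{R}$ fixing the interior point $O$, hence it reverses orientation; being a projectivity carrying $(A,B)$ onto itself, it must therefore interchange $A$ and $B$. The only projective involution interchanging $A,B$ and fixing $O$ has as its second fixed point the harmonic conjugate $O'$ determined by $(A,B;O,O')=-1$. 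Consequently $\rho^{}_{d;O}$ is the harmonic homology with center $O$ whose axis is exactly the locus $O^*$; since each such $O'$ lies outside the segment $\overline{AB}$, the hyperplane $O^*$ avoids $\mathcal{D}$. By the criterion recalled from \cite[p.~64]{MontejanoMorales2003}, this is precisely the statement that every point of $\mathcal{D}$ is a \emph{projective center}, and a convex body all of whose points are projective centers is necessarily an ellipsoid.

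It remains to identify the metric. Fixing a geodesic $\tilde\ell$, Lemma~\ref{lem:CCTrans} describes its isometries as the translations $\tau^{}_{PQ}$ and the reflections $\rho^{}_{d;P}$; these are self-projectivities of $(A,B)$, the former fixing $\{A,B\}$ and realizing, by Lemma~\ref{lem:segments}, arbitrary translation length, the latter interchanging $A,B$. Since every self-projectivity of the interval preserves its endpoints, the isometries exhaust all self-projectivities of $(A,B)$, so the oriented restriction of $d$ to $\tilde\ell$ meets the additivity and projective-invariance hypotheses of Lemma~\ref{lem:PMonSegment}. That lemma then yields $d(x,y)=c_\ell\,|\ln(A,B;x,y)|$, the Hilbert form, and continuity of $d$ forces $c_\ell$ to be the same constant on every geodesic. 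Hence $d$ is a scalar multiple of the Hilbert metric $d_{\mathcal{D}}$; on the ellipsoid $\mathcal{D}$ this is the hyperbolic geometry by \cite[(50.2)]{BusemannKelly1953}.

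I expect the main obstacle to lie in the second paragraph: turning the purely metric involution $\rho^{}_{d;O}$ into a harmonic homology and, in particular, verifying that its off-center fixed locus is a genuine hyperplane $O^*$ disjoint from $\mathcal{D}$, since this is what converts \textqq{center} into \textqq{projective center} and thereby forces $\mathcal{D}$ to be an ellipsoid. A secondary point needing care is the constancy of the cross-ratio factor $c_\ell$ across geodesics, which I would settle by the continuity of $d$ and of the boundary points $A,B$ as functions of the line.
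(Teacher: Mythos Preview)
Your harmonic-homology step---the part you flag as the main obstacle---is sound and in fact more direct than the paper's route through \eqref{equ:MCisPC} and \eqref{equ:MCthenPC}: once every chord through $O$ is a bounded segment $(A,B)$, the projective involution $\rho^{}_{d;O}$ must swap $A,B$ on each such line, so its non-$O$ fixed locus is exactly $O^*$ and is forced to be a hyperplane missing $\mathcal{D}$. The paper instead first locates two points of $O^*$ on a common exterior line, sends that line to infinity, and then matches $\rho^{}_{d;O}$ with an affine reflection on enough points in general position.

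The real gap is elsewhere: you assume $\mathcal{D}$ is bounded, but by definition a projective-metric space of hyperbolic type has $\mathcal{M}$ merely a \emph{proper} open convex subset of $\mathbb{R}^n$. When $\mathcal{M}$ contains a full affine line (a half-space or a strip), your argument collapses immediately---on that geodesic there are no endpoints $A,B$ to swap, $O^*$ is undefined along it, and the Montejano--Morales criterion does not apply. The paper does not sidestep this: it reduces to dimension~$2$, shows such an $\mathcal{M}$ is projectively the strip $\mathcal{P}_{(0,1)}$, computes the forced metric on the two families of chords via Lemmas~\ref{lem:PMonsline} and~\ref{lem:PMonSegment}, and derives a contradiction from continuity of $d$. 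Only after excluding this case does it pass, by a projectivity, to a bounded model where an argument like yours can run.

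Your proposed fix for $c_\ell$ is also insufficient. Continuity of $d$ and of the endpoints $A,B$ gives only that $\ell\mapsto c_\ell$ is continuous on the connected space of chords; that does not force constancy (there are many projective metrics on an ellipsoid). The paper uses the reflections themselves: for two non-intersecting chords $\overline{AB}$ and $\overline{CD}$ one locates a center $O$ with $\rho^{}_{d;O}(\overline{AB})=\overline{CD}$, and since $\rho^{}_{d;O}$ is simultaneously a $d$-isometry and a cross-ratio-preserving projectivity, $c_{AB}=c_{CD}$; intersecting chords are then linked through a third chord disjoint from both.
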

\begin{proof}
By \eqref{equ:symmSPMS} and Lemma~\ref{lem:genCent},
we only need to prove that
if every point of a projective-metric space $(\mathcal M,d)$ of hyperbolic type
is a center,
then it is the hyperbolic geometry.

By \cite[Lemma 12.1, pp.~226]{BusemannKelly1953},
a bounded open convex set $\mathcal I$ in $\mathbb R^n$ ($n\ge2$)
is an ellipsoid if and only if every section of it
by any 2-dimensional plane is an ellipse.
This means, that we only need to prove the statement
in dimension~$2$.\footnote{Although this is already proved in \cite[(52.8)]{Busemann1955},
we give here a more direct proof.}

As it is convex and proper subset of $\mathbb R^2$,
$\mathcal M$ cannot contain two intersecting affine straight line,
because otherwise it coincides with the affine plane $\mathbb R^2$.

\smallskip
\textit{Assume now that $\mathcal M$ contains an affine line.}

A convex domain in the plane which contains a straight line is
either a half plane or a strip bounded by two parallel lines
\cite[{{Exercise [17.8]}}]{BusemannKelly1953},
therefore, $\mathcal M$ is
either $\mathcal P_{(0,\infty)}:=\{(x,y)\in\mathbb R^2: 0<x\}$
or $\mathcal P_{(0,1)}:=\{(x,y)\in\mathbb R^2: 0<x<1\}$
in proper linear coordinatizations of $\mathbb R^2$.
As the perspective projectivity
$\varpi\colon(x,y)\mapsto\big(\frac{x}{x+1},\frac{y}{x+1}\big)$
maps $\mathcal P_{(0,\infty)}$ onto $\mathcal P_{(0,1)}$ bijectively,
\eqref{obs:CentMapsCent} immediately implies that
it is enough to consider the case $\mathcal M=\mathcal P_{(0,1)}$.

Suppose that every point of a projective-metric space $(\mathcal M,d)$
is a center.

By Lemma~\ref{lem:CCTrans}
the point reflections of $(\mathcal M,d)$ restricted onto a line $\tilde\ell$
generate every isometry of $\tilde\ell$,
and, by \eqref{obs:isomProj}, every point reflection of $(\mathcal M,d)$
is the restriction of a projective map of the projective plane onto $\mathcal M$,
hence Lemma~\ref{lem:PMonsline} gives that
$
 d((x,y),(x,z))=c(x)|z-y|
$
for a continuous functions $c\colon(0,1)\to\mathbb R_+$.
Function $c$ is a constant,
because the point reflection $\rho^{}_{d;(t,0)}$
maps $d$-isometrically the lines $\ell_x:=\{(x,y):y\in\mathbb R\}$ ($x\in(0,1)$)
onto lines $\ell_{z}$, where $\frac{1}z={1+\big(\frac{1-t}{t}\big)^2\frac{1-x}{x}}$.

In the same way as in the above paragraph, Lemma~\ref{lem:PMonSegment} gives
\[
 d((x,\lambda+\sigma x),(\mu x,\lambda+\mu\sigma x))
 =\bar c(\lambda,\sigma)\Big|\ln\Big(0,\frac1x;1,\mu\Big)\Big|
 =\bar c(\lambda,\sigma)\Big|\ln\frac{1-\mu x}{\mu(1-x)}\Big|,
\]
where $\bar c\colon\mathbb R\times\mathbb R_+\to\mathbb R_+$
is a continuous function.
Function $\bar c$ is a constant,
because the point reflection $\rho^{}_{d;O}$, where
$O=\big(\frac{\lambda}{2\lambda+\sigma},\frac{\lambda(\lambda+\sigma)}{2\lambda+\sigma}\big)$,
maps the open segment $\overline{(0,\lambda)(1,\lambda+\sigma)}$
onto $\overline{(0,0)(1,0)}$ $d$-isometrically.

By the aboves we have
\[
 d((x,0),(s,y))
 =\begin{cases}
 \bar c(0,0)\big|\ln\frac{x(1-s)}{s(1-x)}\big|,&\text{if $x\ne s$},\\
 c(1/2)|y|,&\text{if $x=s$},
 \end{cases}
\]
for every $x,s\in(0,1)$ and $y\in\mathbb R$,
hence
$
 c(1/2)|y|=\bar c(0,0)\lim_{x\to s}\big|\ln\frac{x(1-s)}{s(1-x)}\big|=0
$
by the continuity of $d$.
This contradiction proves that a projective-metric space
$(\mathcal P_{(0,1)},d)$
cannot be symmetric.
 
\smallskip
\textit{Assume now that $\mathcal M$ contains no affine line.}

Then every supporting line $\ell$ of $\mathcal M$ at any point  $M$ of
$\partial\mathcal M$ can intersect $\partial\mathcal M$ only in
a point, a segment or a ray.
Let $\ell^+$ be a straight line parallel to $\ell$
that is in the other side of $\ell$ than $\mathcal M$ is.
Now, the projectivity of $\mathbb P^2$ that takes the line at infinity
to $\ell^+$ maps $\mathcal M$ to a bounded, convex domain of $\mathbb R^2$,
so, we can suppose from now on without loss of generality by \eqref{obs:CentMapsCent},
that $\mathcal M$ is bounded.

First, we reprove \cite[Lemma~1 and Corollary]{KellyStraus} as
\begin{equation}\label{equ:MCisPC}
 \parbox{100mm}{\noindent\itshape%
  For any inner point $O$ in $\mathcal M$,
  there exist two (maybe ideal) points $P$ and $Q$ in $O^*$ such that
  $PQ$ does not intersect $\mathcal M$.
 }
\end{equation}

There is at least one chord $\overline{AC}$ of $\mathcal M$ which is bisected by $O$.
Then, the harmonic conjugate $\hat P$ of $O$ with respect to $A$ and $B$,
is on the line at infinity.

If $O^*$ has a further point at infinity, then let $\hat Q$ be that point.

If $O^*$ has only $\hat P$ at infinity, then $O^*$ is a connected curve,
hence it cannot lie completely within the strip formed by the two supporting lines of
$\mathcal M$ which are parallel to $AC$ because in that case it would intersect $\mathcal M$.
Thus, a point $\hat Q$ of $O^*$ exists outside this strip.

Thus, line $\hat P\hat Q$ does not intersect $\mathcal M$,
but intersects $O^*$ in the points $\hat P$ and $\hat Q$.


Now we prove that
\begin{equation}\label{equ:MCthenPC}
 \parbox{100mm}{\noindent\itshape%
 A point $O\in\mathcal M$ is a center of $(\mathcal M,d)$
 if and only if
 it is a projective center of $\mathcal M$,
  and the metric point reflection $\rho^{}_{O}$
  is $\varpi^{-1}\circ\bar\rho^{}_{\varpi O}\circ\varpi$
  for a proper projectivity $\varpi$.
 }
\end{equation}
If $O$ is a projective center of $\mathcal M$,
then a projectivity  $\varpi$ exists such that
$\varpi(O)$ is an affine center of $\varpi(\mathcal M)$.
Then $\bar\rho^{}_{\varpi(O)}$ is an
involutive isometry with respect to $d'(\cdot,\cdot):=d(\varpi(\cdot),\varpi(\cdot))$,
that keeps the straight line through $\varpi(O)$ invariant.
Thus $\varpi^{-1}\circ\bar\rho^{}_{\varpi O}\circ\varpi$ is
an involutive isometry with respect to $d$,
that keeps the straight line through $\varpi(O)$ invariant.
That is, $O$ is a center of~$(\mathcal M,d)$.

Assume now that $O$ is a center of $(\mathcal M,d)$.
By \eqref{equ:MCisPC} 
we have two (maybe ideal) points $P$ and $Q$ in $O^*$
such that $PQ$ does not intersect $\mathcal M$.

Let $\varpi$ be the projectivity 
that maps line $PQ$ into the ideal line.
Then $\varpi(O)$ is the affine midpoint
of the chords $\overline{AC}:=\varpi(OP)\cap\varpi(\mathcal M)$
and $\overline{BD}:=\varpi(OQ)\cap\varpi(\mathcal M)$.
%
With this is mind,
\eqref{obs:CentMapsCent} allows us to assume
without loss of generality that
$O$ is the affine midpoint of two chords.
Let these chords be $\overline{AC}$ and $\overline{BD}$.

As $\rho^{}_{d;O}$ and $\bar\rho^{}_{O}$ 
are both restrictions of their corresponding unique collineations
\cite[Theorem 3.1.]{CCMECallum2007},
and these collineations coincide on points $A,C,B,D$ and $O$
three of which are in general position,
hence $\rho^{}_{d;O}\equiv\bar\rho^{}_{O}$ follows.
\def\loose{\color{red}!!!!!!!!!!!!!!!!!!!!
As $\rho^{}_{d;O}$ is an isometry of $d$,
it takes chords of $\mathcal M$ into chords of $\mathcal M$,
and it keeps every chord passing through~$O$.
Observe, that these properties are also valid
for the affine point reflection $\bar\rho^{}_{O}$.

As $\rho^{}_{d;O}(\overline{AB})=\overline{CD}=\bar\rho^{}_{O}(\overline{AB})$,
if $X\in\overline{AB}$, then we have
\begin{align*}
  \rho^{}_{d;O}(X)
   &=\rho^{}_{d;O}(\overline{AB}\cap\widetilde{OX})
    =\rho^{}_{d;O}(\overline{AB})\cap\rho^{}_{d;O}(\widetilde{OX})
    =\overline{CD}\cap\widetilde{OX}
  \\
   &=\bar\rho^{}_{O}(\overline{AB})\cap\bar\rho^{}_{O}(\widetilde{OX})
    =\bar\rho^{}_{O}(\overline{AB}\cap\widetilde{OX})=\bar\rho^{}_{O}(X),
\end{align*}
that, with similar deduction for the other sides of the parallelogram $\mathcal P:=ABDCD$,
shows that $\rho^{}_{d;O}=\bar\rho^{}_{O}$ on~$\mathcal P$.

\goodbreak
If $X$ is inside $\mathcal P$,
then there is a point $P$ on $\mathcal P$ such that $X\in\overline{PA}$.
Then we have
\begin{align*}
  \rho^{}_{d;O}(X)
   &=\rho^{}_{d;O}(\widetilde{AP}\cap\widetilde{OX})
    =\rho^{}_{d;O}(\widetilde{AP})\cap\rho^{}_{d;O}(\widetilde{OX})
    =\widetilde{C\rho^{}_{d;O}(P)}\cap\widetilde{OX}
  \\
   &=\widetilde{C\bar\rho^{}_{O}(P)}\cap\widetilde{OX}
    =\bar\rho^{}_{O}(\widetilde{AP})\cap\bar\rho^{}_{O}(\widetilde{OX})
    =\bar\rho^{}_{O}(\widetilde{AP}\cap\widetilde{OX})
    =\bar\rho^{}_{O}(X),
\end{align*}

If $X\in\mathcal M$ is outside $\mathcal P$,
then two of the chords
$\widetilde{AX}$, $\widetilde{BX}$, $\widetilde{CX}$, and $\widetilde{BX}$
intersect one of the edges of $\mathcal P$,
say $\widetilde{AX}$ 
intersects $\overline{CD}$ in a point $P$. 
Letting $R:=\rho^{}_{d;O}(P)=\bar\rho^{}_{O}(P)$
we obtain
\begin{align*}
  \rho^{}_{d;O}(X)
   &=\rho^{}_{d;O}(\widetilde{AP}\cap\widetilde{OX})
    =\rho^{}_{d;O}(\widetilde{AP})\cap\rho^{}_{d;O}(\widetilde{OX})
    =\widetilde{CR}\cap\widetilde{OX}
  \\
   &=\widetilde{C\bar\rho^{}_{O}(P)}\cap\widetilde{OX}
    =\bar\rho^{}_{O}(\widetilde{CP})\cap\bar\rho^{}_{O}(\widetilde{OX})
    =\bar\rho^{}_{O}(\widetilde{CP}\cap OX)=\bar\rho^{}_{O}(X).
\end{align*}
}
This proves \eqref{equ:MCthenPC}.

As every point of $\mathcal M$ is a center of $(\mathcal M,d)$,
from \eqref{equ:MCthenPC} it follows that
every point of $\mathcal M$ is a projective center,
hence \cite[Theorem~3.3(a)]{MontejanoMorales2003} gives that
$\mathcal M$ is an ellipse.

Lemma~\ref{lem:CCTrans}
, 
\eqref{equ:MCthenPC}, 
and
Lemma~\ref{lem:PMonSegment} give 
that $d(X,Y)=c^{}_\ell h(X,Y)$,
where $h$ is the Hilbert metric on $\mathcal M$ and $c^{}_{\ell}$
is a constant.


Consider the different chords $\overline{AB}$ and $\overline{CD}$ of $\mathcal M$,
where $A,B,C,D\in\partial\mathcal M$.


If $\overline{AB}\cap\overline{CD}=\emptyset$,
then one of the intersections
$\overline{AC}\cap\overline{BD}$ or $\overline{AD}\cap\overline{BC}$
is not empty, and that intersection point $O$
is such that $\rho^{}_{d;O}(\overline{AB})=\overline{CD}$,
hence $c^{}_{AB}=c^{}_{CD}$, i.e.\
$c^{}_{\ell}=c^{}_{\ell'}$ if $(\ell\cap\ell')\cap\mathcal M=\emptyset$.
If $\overline{AB}\cap\overline{CD}=\{O\}$,
then let $\overline{EF}$ be a chord of $\mathcal M$ ($E,F\in\partial\mathcal M$)
such that
it does not intersects the quadrangle $ACBD$.
Then $c^{}_{AB}=c^{}_{EF}=c^{}_{CD}$ proves that $c^{}_{\ell}$
does not depend on $\ell$, hence it is a constant~$c$.

The proof of the theorem is complete.
\end{proof}





\section{Finitely many centers}\label{sec:finCenters}

We prove that some finitely many well-placed centers
are enough to deduce the symmetry of the straight projective-metric spaces.


\begin{lemma}\label{lem:irratPRdense}
  If $d(O,P)/d(O,Q)$ is an irrational number
  for the collinear centers $O,P,Q$ of a straight projective-metric space $(\mathcal M,d)$,
  then every point of the common geodesic $\tilde\ell$ of $O,P,Q$
  is a center of $(\mathcal M,d)$.
\end{lemma}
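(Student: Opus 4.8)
The plan is to push everything down onto the single geodesic $\tilde\ell$, which — since $(\mathcal M,d)$ is straight — is isometric to $\mathbb R$. I would fix such an isometry $\imath\colon\tilde\ell\to\mathbb R$, normalized so that $\imath(O)=0$, and write $p=\imath(P)$, $q=\imath(Q)$. The first observation is that every metric point reflection $\rho^{}_{d;C}$ with $C\in\tilde\ell$ restricts on $\tilde\ell$ to the Euclidean reflection $x\mapsto 2\imath(C)-x$: it keeps the geodesic $\tilde\ell$ (which passes through $C$) invariant, fixes $\imath(C)$, and is involutive; by the midpoint fact underlying \eqref{obs:preflUniq} it has no fixed point other than $C$, so the induced involutive isometry of $\mathbb R$ can only be the reflection about $\imath(C)$. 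Consequently $d(O,P)=|p|$ and $d(O,Q)=|q|$, and the hypothesis becomes the statement that $p/q$ is irrational (in particular $p,q\neq0$, so $P,Q\neq O$).

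Next I would generate a dense family of centers along $\tilde\ell$. From the reflection formula, the translation $\tau^{}_{OP}=\rho^{}_{d;O}\circ\rho^{}_{d;P}$ restricts on $\tilde\ell$ to $x\mapsto x-2p$, and $\tau^{}_{OQ}$ to $x\mapsto x-2q$; both preserve $\tilde\ell$ because they are composites of reflections at points of $\tilde\ell$. The key point is that each orbit point remains a genuine center: Lemma~\ref{lem:genCent} says precisely that reflecting a center through a center yields a center, so if $C$ is a center then $\tau^{}_{OP}(C)=\rho^{}_{d;O}\bigl(\rho^{}_{d;P}(C)\bigr)$ is a center (two applications of Lemma~\ref{lem:genCent}), and likewise for $\tau^{}_{OQ}$ and for the inverses $\tau^{}_{PO},\tau^{}_{QO}$. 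Hence, by induction on word length, the entire orbit of the center $O$ under the group $\langle\tau^{}_{OP},\tau^{}_{OQ}\rangle$ consists of centers; read through $\imath$, this orbit is exactly
\[
 \{\,2mp+2nq:m,n\in\mathbb Z\,\}=2(p\mathbb Z+q\mathbb Z).
\]

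Finally I would invoke irrationality and closedness. Since $p/q\notin\mathbb Q$, the subgroup $p\mathbb Z+q\mathbb Z$ of $(\mathbb R,+)$ is dense, hence so is $2(p\mathbb Z+q\mathbb Z)$; translating back, $\tilde\ell$ carries a dense set of centers. By Lemma~\ref{lem:PRSclosed} the set of centers of $(\mathcal M,d)$ is closed, so its trace on $\tilde\ell$ is relatively closed and dense in $\tilde\ell$, and therefore equal to $\tilde\ell$. This is the assertion.

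The only substantive step is the density argument, which is exactly where the irrationality hypothesis is used; the rest is bookkeeping. The one place that needs care is the very first claim, that $\rho^{}_{d;C}$ induces the nontrivial reflection (and not the identity) on $\tilde\ell$, since this is what pins down $d(O,P)=|p|$, $d(O,Q)=|q|$ and makes $\tau^{}_{OP},\tau^{}_{OQ}$ translate by $2p,2q$ (consistent with the length computed in Lemma~\ref{lem:segments}\ref{enu:transLength}). I expect the main obstacle to be organizational rather than conceptual: ensuring that the points of the orbit $2(p\mathbb Z+q\mathbb Z)$ are produced as honest centers of the ambient space via Lemma~\ref{lem:genCent}, and not merely as reflections performed inside the line.
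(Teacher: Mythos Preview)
Your argument is correct and is essentially the same as the paper's: both transport the problem to $\mathbb R$ via an isometry $\imath$ with $\imath(O)=0$, use the translations $\tau^{}_{OP},\tau^{}_{OQ}$ (shifting by $\pm2p,\pm2q$) together with Lemma~\ref{lem:genCent} to produce the orbit $2(p\mathbb Z+q\mathbb Z)$ of centers, invoke irrationality of $p/q$ (the paper phrases this as Kronecker's Approximation Theorem) for density, and close up with Lemma~\ref{lem:PRSclosed}. Your version is slightly more explicit in checking that $\rho^{}_{d;C}$ restricts to the Euclidean reflection on $\tilde\ell$, whereas the paper normalizes by assuming $P\in\overline{OQ}$ and appeals to Lemma~\ref{lem:segments}\ref{enu:transLength} for the translation length; these are cosmetic differences only.
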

\begin{proof}
By Lemma~\ref{lem:PRSclosed} we need only to prove that the set of centers
on $\tilde\ell$ is dense.

We may assume without loss of generality that $P\in\overline{OQ}$.

As the projective-metric space is straight,
there exists an isometry $\imath$ from $\tilde\ell$ to $\mathbb R$
such that $\imath(O)=0$, and hence $\imath(P)=d(O,P)=:p$ and $\imath(Q)=d(O,Q)=:q$.
By our assumption we have $0<p<q$,
and the condition of the lemma gives that $p/q$ is an irrational number.
Then, Kronecker's Approximation Theorem \cite{WP-KroneckerAT} gives that
for any
$x\in\mathbb R$ and $\varepsilon>0$ there are $i,j\in\mathbb Z$,
such that $|ip-jq-x|<\varepsilon$.

Letting $\tau^{}_{OP}:=\rho^{}_{d;P}\circ\rho^{}_{d;O}$ and
$\tau^{}_{OQ}:=\rho^{}_{d;Q}\circ\rho^{}_{d;O}$ as before,
we obtain by Lemma~\ref{lem:segments}\ref{enu:transLength}, that
$d(\tau^{}_{OP}(X),X)=2p$ and $d(\tau^{}_{OQ}(X),X)=2q$
for any point $X\in\tilde\ell$.
Thus, we obtain
$ 
 \imath\circ\tau^{}_{OP}\circ\imath^{-1}\colon x\mapsto x+2p
 \ \text{ and }\ 
 \imath\circ\tau^{}_{OQ}\circ\imath^{-1}\colon x\mapsto x+2q.
$ 
This means that the set $\mathcal S:=\{\tau^{i}_{OP}(\tau^{j}_{OQ}(O)):i,j\in\mathbb Z\}$
is dense in $\tilde\ell$.
However, Lemma~\ref{lem:genCent} implies
$\tau^{}_{OX}\circ\rho^{}_{d;Y}\circ\tau^{}_{XO}=\rho^{}_{d;\tau^{}_{OX}(Y)}$
for any centers $X,Y\in\tilde\ell$, so every point in $\mathcal S$ is a center.
This proves the Lemma.
\end{proof}

We say that the different points $O$ and $P_i,Q_i$ ($i=1,\dots,k$)
form a \emph{pencil} with \emph{tip} $O$ 
if the points $O,P_i,Q_i$ are collinear for every~$i$.
Such a pencil is called \emph{$l$-dimensional} if the linear space
generated by the affine vectors $\overline{OP_i}$ is $l$-dimensional.

\begin{lemma}\label{lem:pencPMcent}
 In a neighborhood of a center $O$ of a 
 straight projective-metric space
 every point of the affine hyperplane $\mathcal H$
 spanned by the pencil of centers $P_i,Q_i$ ($i=1,\dots,k$)
 and tip $O$ is a center,
 if $d(O,P_i)/d(O,Q_i)$ is irrational for every~$i$.
\end{lemma}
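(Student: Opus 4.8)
The plan is to first upgrade the finitely many irrationality hypotheses into full geodesic lines of centers via Lemma~\ref{lem:irratPRdense}, and then to propagate centers one dimension at a time using the translations $\tau^{}_{OA}$. Concretely, for each $i$ I would apply Lemma~\ref{lem:irratPRdense} to the collinear centers $O,P_i,Q_i$ (whose ratio $d(O,P_i)/d(O,Q_i)$ is irrational) to conclude that the \emph{entire} geodesic $\tilde\ell_i=\widetilde{OP_i}$ consists of centers. Reindexing so that $\overrightarrow{OP_1},\dots,\overrightarrow{OP_l}$ is a basis of the direction space of $\mathcal H$, I set $\mathcal H_j=O+\operatorname{span}(\overrightarrow{OP_1},\dots,\overrightarrow{OP_j})$, so that $\tilde\ell_1=\mathcal H_1\subset\cdots\subset\mathcal H_l=\mathcal H$, and I fill a neighborhood of $O$ in each $\mathcal H_j$ by induction on $j$.

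Two facts drive the induction. \emph{(a)} Any isometry $\phi$ carries a center $Y$ to a center, since by \eqref{obs:isomProj} and \eqref{obs:preflUniq} the conjugate $\phi\circ\rho^{}_{d;Y}\circ\phi^{-1}$ is the unique metric point reflection at $\phi(Y)$; this is exactly the mechanism already behind Lemma~\ref{lem:genCent}. \emph{(b)} If a center $A$ lies in an affine subspace $\mathcal G$, then $\rho^{}_{d;A}$ preserves $\mathcal G\cap\mathcal M$, because $\mathcal G\cap\mathcal M$ is precisely the union of the geodesic lines through $A$ that lie in $\mathcal G$, each of which $\rho^{}_{d;A}$ keeps invariant. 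Taking $\mathcal G=\mathcal H_{j+1}$ and $A\in\tilde\ell_{j+1}$, facts \emph{(a)}--\emph{(b)} make $\tau^{}_{OA}=\rho^{}_{d;A}\circ\rho^{}_{d;O}$ an isometry that preserves $\mathcal H_{j+1}\cap\mathcal M$ and sends centers to centers.

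For the inductive step I assume a neighborhood $N_j$ of $O$ in $\mathcal H_j$ consists of centers. For each center $A\in\tilde\ell_{j+1}$ near $O$, the image $\tau^{}_{OA}(N_j)$ is then a set of centers forming a neighborhood of $\tau^{}_{OA}(O)=\rho^{}_{d;A}(O)\in\tilde\ell_{j+1}$ inside the $j$-dimensional flat $\tau^{}_{OA}(\mathcal H_j)\subset\mathcal H_{j+1}$. As $A$ runs over $\tilde\ell_{j+1}$ near $O$, the anchor points $\rho^{}_{d;A}(O)$ sweep a neighborhood of $O$ in $\tilde\ell_{j+1}$, while $\tau^{}_{OA}(\mathcal H_j)$ stays close to $\mathcal H_j$ and transverse to $\tilde\ell_{j+1}$; hence the union of these $j$-disks fills a neighborhood of $O$ in $\mathcal H_{j+1}$. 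After $l$ steps this yields a neighborhood of $O$ in $\mathcal H=\mathcal H_l$, which is the assertion.

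The hard part will be this final covering claim. I expect to need the joint continuity of $(A,Y)\mapsto\rho^{}_{d;A}(Y)$, which follows from the boundedness-and-congestion-point argument of Lemma~\ref{lem:PRSclosed} (any limit of $\rho^{}_{d;A_n}(Y)$ is forced, by the uniqueness in \eqref{obs:preflUniq}, to equal $\rho^{}_{d;A}(Y)$), together with a transversal-sweep argument (invariance of domain) guaranteeing that a continuous one-parameter family of $j$-disks anchored along the transverse line $\tilde\ell_{j+1}$ covers a genuine $(j+1)$-dimensional neighborhood. Here fact \emph{(b)} is essential: it keeps the swept region inside $\mathcal H_{j+1}$, so that the $l$-dimensional neighborhood produced after all $l$ steps is exactly $\mathcal H$ and not some other $l$-flat through $O$.
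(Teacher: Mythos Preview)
Your skeleton is the paper's: induct on the dimension $j$ of the flat already filled with centers, start from Lemma~\ref{lem:irratPRdense} to make each $\tilde\ell_i$ a line of centers, and push the patch $N_j\subset\mathcal H_j$ along $\tilde\ell_{j+1}$ via reflections. Your facts (a)--(b) and the reindexing to a spanning subset appear essentially verbatim in the paper. The divergence is only in how the inductive covering is established, and there your proposal has a real gap.

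Invariance of domain needs injectivity of $\Phi(A,Y)=\tau^{}_{OA}(Y)$, and this fails in general: for $A_1\ne A_2$ the $j$-flats $\tau^{}_{OA_1}(\mathcal H_j)$ and $\tau^{}_{OA_2}(\mathcal H_j)$ are \emph{distinct} hyperplanes of $\mathcal H_{j+1}$ (they meet $\tilde\ell_{j+1}$ at different anchors, and neither can equal $\tilde\ell_{j+1}$ since $\tau^{}_{OA}$ preserves $\tilde\ell_{j+1}$), so generically they intersect in a $(j-1)$-flat and $\Phi$ has double points. What \emph{does} work along your lines is a one-variable intermediate-value argument: for a target $Z$ near $O$, the map $A\mapsto\pi\bigl(\tau^{}_{AO}(Z)\bigr)$, with $\pi$ the affine projection of $\mathcal H_{j+1}$ onto $\tilde\ell_{j+1}$ along $\mathcal H_j$, is continuous in $A\in\tilde\ell_{j+1}$; comparing with the case $Z=O$, where it reduces to $A\mapsto\rho^{}_{d;O}\rho^{}_{d;A}(O)$ and changes sides of $O$ as $A$ crosses $O$, one finds some $A$ near $O$ with $\tau^{}_{AO}(Z)\in\mathcal H_j$, hence in $N_j$ by a further continuity estimate. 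So your route can be repaired, but not with the tool you named.

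The paper handles the covering by a different device, namely by re-invoking Lemma~\ref{lem:irratPRdense} inside the induction. From the reflected patches $\rho^{}_{d;X}(\mathcal U_l)$ ($X\in\tilde\ell_k$) it considers chords $\widetilde{QP}$ with $Q\in\mathcal U_l$ and $P\in\rho^{}_{d;X}(\mathcal U_l)$ in the plane through $\ell_k$ and $P$. If such a chord failed to consist entirely of centers, then for each nearby $Y\in\tilde\ell_k$ the point $P_Y$ where $\rho^{}_{d;Y}(\mathcal U_l)$ meets $\widetilde{QP}$ is a third center on that chord, so Lemma~\ref{lem:irratPRdense} forces $d(Q,P)/d(Q,P_Y)\in\mathbb Q$ for \emph{every} $Y$; continuity of $Y\mapsto P_Y$ then pins $P_Y\equiv P$, which manufactures an open triangle of centers whose closure (Lemma~\ref{lem:PRSclosed}) fills $\widetilde{QP}$ after all, a contradiction. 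This ``rationality trap'' stays entirely inside the paper's own lemmas, whereas your approach trades it for an external topological step.
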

\begin{proof}
We prove by induction.
We consider the $n$-dimensional  straight projective-metric space $(\mathcal M,d)$.

By Lemma~\ref{lem:irratPRdense} we know that all points
of the geodesics $\tilde\ell_i:=OP_i=P_iQ_i$ ($i=1,\dots,n$)
are centers of $(\mathcal M,d)$.

Assume now that for every $l$-dimensional pencil of the given type
the statement of the lemma is fulfilled.

Let the $(l+1)$-dimensional pencil $\mathcal P_{l+1}$ of centers $P_i,Q_i$ and tip $O$
be such that $d(O,P_i)/d(O,Q_i)$ is irrational ($i=1,\dots,k\le n$),
where we clearly have $k\ge l+1$.

If $k>l+1$, then
the pencil of $P_i,Q_i$ and tip $O$ for $i=1,\dots,k-1$
can be either of dimension $l+1$ or of dimension $l$.
In the former case remove the geodesic $\tilde\ell_k:=OP_k=P_kQ_k$,
and continue this procedure until no removing is possible.
This way we can assume that
the pencil $\mathcal P_{l+1}$ 
is such that the pencil $\mathcal P_l$ of $P_i,Q_i$ and tip $O$ ($i=1,\dots,k-1$)
is of dimension~$l$.

By the hypothesis of the induction
there is a 
neighborhood $\mathcal U_l$ of $O$ in the hyperplane $\mathcal H_l$
spanned by the pencil $\mathcal P_l$,
where every point is a center of the projective-metric space.
Further, every point of the geodesic $\tilde\ell_k:=OP_k=P_kQ_k$
is a center by Lemma~\ref{lem:irratPRdense}.

Let $\mathcal O$ be a suitably small neighborhood of~$O$

Let $\mathcal H_l^X$ be the affine subspace spanned
by the points of $\rho^{}_X(\mathcal U_l)$
for every point $X\in\tilde\ell_k\cap\mathcal O$.
Then every point $P\in\rho^{}_X(\mathcal U_l)$ is a center by Lemma~\ref{lem:genCent}.
Let $\mathcal P$ be the common plane of $\ell_k$ and $P$,
and let $Q\in\mathcal P\cap\mathcal U_l$.
Then the geodesic $\widetilde{QP}$ contains at least two centers,
namely $Q$ and $P$.

Let $\mathcal O_l^X$ be an open set in $\rho^{}_X(\mathcal U_l)$ containing $\rho^{}_X(O)$.

Let the points $P\in\mathcal O_l^X$ and  $Q\in\rho^{}_X(\mathcal O_l^X)$ be
such that the geodesic $\widetilde{QP}$ contains a point that is not a center.
If there are no such points,
then the hypothesis of the induction follows for $l+1$, 
that proves the statement of the lemma.

As $\rho^{}_\cdot(\cdot)$ is continuous in its subscript and $\mathcal O_l^X$ is open,
there is a (small) $\varepsilon>0$
such that $\rho^{}_Y(\mathcal U_l)$ intersects $\widetilde{QP}$ in a point $P_Y$
if $Y\in\mathcal Y:=\{Y\in\tilde\ell_k:d(X,Y)<\varepsilon\}$.
Observe that $P_Y$ depends on $Y$ continuously,
hence it either runs over a closed open segment $\mathcal S$ or it is a fixed point $P$.

As there is a point on $\widetilde{QP}$ that is not a center,
the ratio $d(Q,P)/d(Q,P_Y)$ is rational for every $P_Y$
by Lemma~\ref{lem:pencPMcent}, hence $P_Y$ is a fixed point.
Moreover, $P_Y\equiv P$, because $P_X=P$.

Thus,
every point $Z$ of the open triangle $\mathcal Z$ spanned by $\mathcal Y$ and $P$ is a center,
hence every point of the geodesics $\widetilde{QZ}$ is a center.
If $Z\to P$ in $\mathcal Z$,
the geodesic $\widetilde{QZ}$ tends to $\widetilde{QP}$,
and therefore, every point of $\widetilde{QP}$ is a center by Lemma~\ref{lem:PRSclosed}.
This is a contradiction, hence
every point of every geodesic $\widetilde{QP}$ is a center,
if $P\in\mathcal O_l^X$.
This proves the hypothesis of the induction, 
hence the statement of the lemma.
\end{proof}

The following result can be seen as a specific generalization of \cite[(51.5)]{Busemann1955}.

\begin{theorem}\label{thm:strPMcent}
 The set of the centers of an $n$-dimensional straight projective-metric space $(\mathcal M,d)$
 contains an $n$-dimensional pencil of points $P_i,Q_i$ ($i=1,\dots,k\ge n$) and tip $O$
 such that $d(O,P_i)/d(O,Q_i)$ is irrational for every~$i$
 if and only if it is either a Minkowskian or the hyperbolic geometry.
\end{theorem}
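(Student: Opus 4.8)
The plan is to read this theorem off as a straightforward assembly of results already in place, handling the two implications separately. The forward implication is essentially a construction, while the reverse implication carries the structural content; but almost all of that content has been quarantined into Lemma~\ref{lem:pencPMcent} and Theorems~\ref{thm:PPMcent} and~\ref{thm:HPMcent}, so the theorem itself should be short.

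For the \emph{if} direction I would argue as follows. By \eqref{equ:symmSPMS} a Minkowskian geometry and the hyperbolic geometry are symmetric, hence every point is a center; it then remains only to exhibit an $n$-dimensional pencil of the required kind. I would fix any point $O$ and choose $n$ geodesic lines $\tilde\ell_1,\dots,\tilde\ell_n$ through $O$ whose affine directions are linearly independent. On each $\tilde\ell_i$ the space being straight supplies an isometry onto $\mathbb R$ sending $O$ to $0$, so I may select $P_i,Q_i\in\tilde\ell_i$ with $d(O,P_i)=\alpha_i$ and $d(O,Q_i)=\beta_i$ for any prescribed positive reals; choosing each ratio $\alpha_i/\beta_i$ irrational yields the pencil. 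Since the geometry is symmetric all these points are centers, and the $n$ independent directions make the pencil $n$-dimensional.

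For the \emph{only if} direction I would start from an $n$-dimensional pencil of centers $P_i,Q_i$ with tip $O$ for which every ratio $d(O,P_i)/d(O,Q_i)$ is irrational. Because the pencil is $n$-dimensional, the affine subspace it spans is the whole of $\mathbb R^n$, so Lemma~\ref{lem:pencPMcent} produces a neighborhood of $O$ in which \emph{every} point is a center; in particular the set of centers contains a non-empty open set. Now I would split according to the type of the straight space: if $(\mathcal M,d)$ is of parabolic type, Theorem~\ref{thm:PPMcent} forces it to be a Minkowskian geometry, while if it is of hyperbolic type, Theorem~\ref{thm:HPMcent} forces it to be the hyperbolic geometry. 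Since a straight projective-metric space is by definition of one of these two types, the conclusion follows in either case.

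The one step demanding genuine care is the passage through Lemma~\ref{lem:pencPMcent}: one must check that an $n$-dimensional pencil really feeds that lemma with enough data to fill an open set, i.e.\ that the spanned affine subspace is top-dimensional and that the neighborhood the lemma returns is genuinely open in $\mathcal M$ rather than in a lower-dimensional slice. Once that is confirmed, the rest is merely a dispatch between Theorems~\ref{thm:PPMcent} and~\ref{thm:HPMcent}, and no new estimate or construction is required.
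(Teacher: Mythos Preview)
Your proposal is correct and follows essentially the same route as the paper: the paper dispatches the \emph{if} direction in one line by noting that Minkowskian and hyperbolic geometries are symmetric, and for the \emph{only if} direction it invokes Lemma~\ref{lem:pencPMcent} to obtain an open set of centers and then appeals to Theorems~\ref{thm:PPMcent} and~\ref{thm:HPMcent}. Your additional remarks---the explicit construction of the pencil and the check that the spanned subspace is top-dimensional---are fair elaborations but not substantively different from the paper's argument.
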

\begin{proof}
As every point of 
a Minkowskian or the hyperbolic geometry is a center,
we need only to prove the reverse statement of the theorem.

Assume that
the set of the centers of $(\mathcal M,d)$
contains an $n$-dimensional pencil of points $P_i,Q_i$ ($i=1,\dots,k\ge n$) and tip~$O$.

By Lemma~\ref{lem:pencPMcent},
this assumption implies that the set of the centers of $(\mathcal M,d)$
contains a neighborhood of~$O$,
which by theorems \ref{thm:PPMcent} and \ref{thm:HPMcent}
proves the desired result.
\end{proof}

For projective-metric spaces of parabolic type
or of hyperbolic type containing no affine line
we need less centers to deduce that the metric is
Minkowskian or hyperbolic.

\begin{theorem}\label{thm:parPMcentMinkowski}
 The set of the centers of an $n$-dimensional projective-metric space
 of parabolic type
 contains
 $n+1$ affinely independent point and an additional one
 affinely independent from the others over the rational numbers
 if and only if it is a Minkowski geometry.
\end{theorem}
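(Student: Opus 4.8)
The plan is to reduce, exactly as in the proof of Theorem~\ref{thm:PPMcent}, to showing that the set of centers contains a non-empty open set, and then to quote Theorem~\ref{thm:PPMcent}. The converse implication is immediate: by \eqref{equ:symmSPMS} a Minkowski geometry is symmetric, so every point---in particular any prescribed configuration of $n+2$ points---is a center. So assume $(\mathcal M,d)$ is of parabolic type, i.e.\ $\mathcal M=\mathbb R^n$, and that its set of centers $\mathcal C$ contains affinely independent points $C_0,\dots,C_n$ together with one further center $C_{n+1}$ that is independent of them over $\mathbb Q$. Applying an affinity (a projective map, hence harmless by \eqref{obs:CentMapsCent}), I would place $C_0$ at the origin and $C_i$ at the standard basis vector $e_i$ for $1\le i\le n$; then $C_{n+1}=v=(\lambda_1,\dots,\lambda_n)$, and I read the independence hypothesis in its operative form, namely that $1,\lambda_1,\dots,\lambda_n$ are linearly independent over $\mathbb Q$.

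The decisive structural input is special to the parabolic case: by \eqref{equ:PMCisAC} the metric point reflection at every center equals the affine point reflection there. Combined with Lemma~\ref{lem:genCent}, which says that $\rho^{}_{O}(Q)$ is again a center whenever $O,Q$ are, this shows that $\mathcal C$ is closed under the affine operation $(O,Q)\mapsto 2O-Q$. From this I would manufacture translations: for any $Y\in\mathcal C$ we first get $-Y=2\cdot 0-Y\in\mathcal C$, and then $Y+2e_i=2e_i-(-Y)\in\mathcal C$, and likewise $Y+2v\in\mathcal C$. Hence translation by every vector of the lattice $2\Gamma$, where $\Gamma=\mathbb Z e_1+\dots+\mathbb Z e_n+\mathbb Z v$, maps $\mathcal C$ into itself. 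Since $0,e_1,\dots,e_n,v\in\mathcal C$, it follows that
\[
  \mathcal C\supseteq\{0,e_1,\dots,e_n,v\}+2\Gamma .
\]
(A short parity check shows this inclusion is in fact an equality, but only the inclusion is needed below.)

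Finally I would feed in the arithmetic. Because $1,\lambda_1,\dots,\lambda_n$ are $\mathbb Q$-linearly independent, Kronecker's approximation theorem \cite{WP-KroneckerAT} makes $\Gamma=\mathbb Z^n+\mathbb Z v$, and therefore $2\Gamma$, dense in $\mathbb R^n$; hence $\mathcal C$ is dense. Since $\mathcal C$ is closed by Lemma~\ref{lem:PRSclosed}, it equals $\mathbb R^n$, so it certainly contains a non-empty open set, and Theorem~\ref{thm:PPMcent} identifies $(\mathcal M,d)$ as a Minkowski geometry. The step I expect to be most delicate is precisely this last, arithmetic one: with only the $n+1$ affinely independent centers $C_0,\dots,C_n$ the reachable set is the discrete coset union displayed above, which is nowhere dense, so the entire force of the conclusion rests on the added point $C_{n+1}$ contributing a genuinely $\mathbb Q$-independent direction and on Kronecker's theorem converting that independence into topological density. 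Pinning down the correct (strong) meaning of ``independent over $\mathbb Q$''---the $\mathbb Q$-independence of $1,\lambda_1,\dots,\lambda_n$, rather than the mere irrationality of a single coordinate---is exactly what makes the density argument, and hence the theorem, go through.
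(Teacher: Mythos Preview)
Your proof is correct and follows essentially the same route as the paper: invoke \eqref{equ:PMCisAC} so that metric reflections are affine, use Lemma~\ref{lem:genCent} to generate translations, apply Kronecker's theorem to obtain density, then Lemma~\ref{lem:PRSclosed} to get $\mathcal C=\mathbb R^n$, and finish with an earlier characterization. The only cosmetic difference is that you conclude via Theorem~\ref{thm:PPMcent} whereas the paper cites Theorem~\ref{thm:strPMcent}; once $\mathcal C=\mathbb R^n$ either reference works, and yours is arguably the more direct one.
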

\begin{proof}
As every point of any Minkowski geometry is a center,
we need only to prove the reverse statement of the theorem.

By \eqref{equ:PMCisAC},
if $O$ is a center, then $\rho^{}_O\equiv\bar\rho^{}_O$.
The product of any two affine point reflections
is an affine translation,
so Kronecker's Approximation Theorem \cite{WP-KroneckerAT}
gives that the centers generated by repeated applications
of the metric point reflections,
form a dense set in $\mathbb R^n$.

Then Lemma~\ref{lem:PRSclosed} and Theorem~\ref{thm:strPMcent}
imply the statement of the Theorem.
\end{proof}

\begin{theorem}\label{thm:hypPMcentBolyai}
 The set of the centers of an $n$-dimensional projective-metric space
 of hyperbolic type with no affine line inside
 contains an $(n-1)$-dimensional pencil of points $P_i,Q_i$ ($i=1,\dots,k\ge n-1$) and tip $O$
 such that $d(O,P_i)/d(O,Q_i)$ is irrational for every~$i$
 if and only if it is the hyperbolic geometry.
\end{theorem}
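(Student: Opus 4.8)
The forward implication is immediate from \eqref{equ:symmSPMS}, so the plan is to treat only the converse. First I would normalise as in the second half of the proof of Theorem~\ref{thm:HPMcent}: since $\mathcal M$ contains no affine line, a projectivity (admissible by \eqref{obs:CentMapsCent}) carries the ideal line just outside $\mathcal M$, so I may assume $\mathcal M$ is bounded. Applying Lemma~\ref{lem:pencPMcent} to the given $(n-1)$-dimensional pencil with irrational ratios yields an open set $\mathcal U$ of centers lying inside the affine hyperplane $\mathcal H$ spanned by the pencil, and by \eqref{equ:MCthenPC} (whose derivation used exactly the no-affine-line hypothesis) every point of $\mathcal U$ is a projective center of $\mathcal M$.

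Next I would pass to the section $(\mathcal H\cap\mathcal M,d|_{\mathcal H})$. A point reflection $\rho^{}_{d;O'}$ with $O'\in\mathcal H$ fixes $O'$ and preserves every line through $O'$, hence preserves $\mathcal H$ and restricts to a metric point reflection of the section; thus $\mathcal U$ is an open set of centers of the $(n-1)$-dimensional straight projective-metric space $\mathcal H\cap\mathcal M$, which is again of hyperbolic type with no affine line. Theorem~\ref{thm:HPMcent} in dimension $n-1$ then gives that $\mathcal H\cap\mathcal M$ is an ellipsoid and $d|_{\mathcal H}$ is a constant multiple of its hyperbolic metric.

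To reach the whole space I would cut $\mathcal M$ by the two-planes $\pi$ through $O$. The same restriction argument shows that the open segment $\pi\cap\mathcal U$ consists of centers of the planar section $(\pi\cap\mathcal M,d|_{\pi})$, each of which is a projective center of $\pi\cap\mathcal M$; feeding the irrational-ratio pairs of this segment into Lemma~\ref{lem:irratPRdense} turns the entire chord $\pi\cap\mathcal H\cap\mathcal M$ into centers of the section. Thus every central two-plane section carries a whole chord of centers, whence — this is the $n=2$ instance of the theorem, played as the base case — $\pi\cap\mathcal M$ is an ellipse and $d|_{\pi}=c_{\pi}h$ for its hyperbolic metric $h$. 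Every central section being an ellipse forces $\mathcal M$ to be an ellipsoid (the reciprocal square of the radial function is a quadratic form on every great circle, hence globally quadratic; equivalently a central version of \cite[Lemma 12.1]{BusemannKelly1953}), and the constants $c_{\pi}$ coincide because any two of these planes can be joined through a chain sharing a line through $O$, on which both $d$ and $h$ are already fixed. Hence $d=c\,h$ and $(\mathcal M,d)$ is the hyperbolic geometry.

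The hard part will be the base case just invoked: that a planar bounded convex domain carrying a full chord of centers is already the hyperbolic geometry. That its boundary is a conic is the projective rigidity of a one-parameter family of projective centers, which I would extract from \cite[Lemma~1 and Corollary]{KellyStraus} together with \cite[Theorem~3.3]{MontejanoMorales2003}. Fixing the metric is more delicate: the chord supplies, through Lemma~\ref{lem:CCTrans}, only the translations and point reflections along that single geodesic, so Lemma~\ref{lem:PMonSegment} identifies $d$ with $c\,h$ merely on chords meeting the chord of centers, and the transverse behaviour of the metric is not pinned down by those symmetries alone. Propagating the centers off the chord — equivalently, producing the reflections transverse to $\mathcal H$ that are absent from an $(n-1)$-dimensional pencil — is precisely where the no-affine-line hypothesis must be used decisively, and I expect this to be the genuine crux of the argument.
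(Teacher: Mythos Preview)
Your detour through two--dimensional sections is unnecessary, and the step you flag as the ``hard part'' dissolves once you use \eqref{equ:MCthenPC} in both directions. The paper's argument is shorter: after Lemma~\ref{lem:pencPMcent} gives an open neighbourhood $\mathcal U$ of $O$ in $\mathcal H$ consisting of centers, one first extends $\mathcal U$ to all of $\mathcal M\cap\mathcal H$ using Lemma~\ref{lem:genCent} (each $\rho^{}_{d;O'}$ with $O'\in\mathcal H$ preserves $\mathcal H$, so reflecting centers of $\mathcal U$ through one another produces a dense set of centers in $\mathcal H$, and Lemma~\ref{lem:PRSclosed} closes it up). Now \eqref{equ:MCthenPC} makes every point of $\mathcal M\cap\mathcal H$ a projective center of the \emph{full} body $\mathcal M$, and \cite[Theorem~3.3(a)]{MontejanoMorales2003} applies directly in dimension $n$ --- a hyperplane section consisting entirely of projective centers already forces $\mathcal M$ to be an ellipsoid. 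There is no need to slice down to planes $\pi$ and invoke a planar base case.

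As for fixing the metric: once $\mathcal M$ is an ellipsoid, every interior point is a projective center, and the converse implication in \eqref{equ:MCthenPC} (projective center $\Rightarrow$ center of $(\mathcal M,d)$) makes every point of $\mathcal M$ a $d$--center. Theorem~\ref{thm:HPMcent} then applies verbatim and yields $d=c\,h$. So the ``transverse behaviour of the metric'' is not left floating: the reflections you were missing off $\mathcal H$ are supplied for free by \eqref{equ:MCthenPC} the moment the domain is known to be an ellipsoid. Your outline is not wrong in spirit, but it reproves by hand (and leaves incomplete) what the equivalence in \eqref{equ:MCthenPC} together with the $n$--dimensional version of Montejano--Morales gives in two lines.
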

\begin{proof}
As every point of the hyperbolic geometry is a center,
we need only to prove the reverse statement of the theorem.

Assume that
the set of the centers of
the $n$-dimensional projective-metric space $(\mathcal M,d)$
of hyperbolic type with no affine line inside
contains an $(n-1)$-dimensional pencil of points $P_i,Q_i$ ($i=1,\dots,k\ge n-1$) and tip~$O$.

By Lemma~\ref{lem:pencPMcent},
this assumption implies that the set of the centers of $(\mathcal M,d)$
contains a neighborhood of~$O$ in an $(n-1)$-dimensional hyperplane $\mathcal H$.
By Lemma~\ref{lem:genCent} this means that every point of $\mathcal M\cap\mathcal H$
is a center, which, by \eqref{equ:MCthenPC}, means that
every point of $\mathcal M\cap\mathcal H$ is a projective center of~$\mathcal M$.
According to \cite[Theorem~3.3(a)]{MontejanoMorales2003},
this implies that $\mathcal M$ is an ellipsoid, hence the theorem.
\end{proof}


\def\loose
{
\begin{corollary}\label{thm:MCPCNC}
 The following are equivalent conditions for point $O\in\mathcal I$
 of the Hilbert geometry $(\mathcal I,d_{\mathcal I})$:
 \begin{enumerate}
  \item $O$ is of non-positive curvature;
  \item $O$ is a projective center of $\mathcal I$;
  \item $O$ is a metric center of $(\mathcal I,d_{\mathcal I})$.
 \end{enumerate}
\end{corollary}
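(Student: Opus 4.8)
The plan is to split the three-way equivalence into the two links (2)$\Leftrightarrow$(3) and (1)$\Leftrightarrow$(2), the first of which is already contained in the paper. \emph{First} I would observe that a Hilbert geometry $(\mathcal I,d_{\mathcal I})$ is a bounded straight projective-metric space of hyperbolic type containing no affine line, so that the characterization \eqref{equ:MCthenPC} applies verbatim. Since condition (3) is exactly the statement that $O$ is a center of $(\mathcal I,d_{\mathcal I})$ in the sense used throughout, \eqref{equ:MCthenPC} immediately gives that $O$ is a metric center if and only if it is a projective center of $\mathcal I$; that is, (2)$\Leftrightarrow$(3), with no further work.

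\emph{Next} I would establish (1)$\Leftrightarrow$(2), which is where curvature genuinely enters and which must rest on the Kelly--Straus analysis, since the point-reflection calculus of the earlier lemmas never sees second-order data. I would first recall the criterion of \cite[p.~64]{MontejanoMorales2003}: $O$ is a projective center of $\mathcal I$ exactly when the harmonic-conjugate locus $O^*$ is a straight line (a hyperplane, in higher dimension) disjoint from $\mathcal I$. By \eqref{equ:MCisPC} --- the reproved \cite[Lemma~1 and Corollary]{KellyStraus} --- one \emph{always} has a line meeting $O^*$ in two points and avoiding $\mathcal I$; what distinguishes a projective center is that $O^*$ itself is such a line. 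The remaining input is precisely the Kelly--Straus curvature computation \cite{KellyStraus}, which expresses the sign of the curvature at $O$ through the infinitesimal shape of $O^*$ and shows that $O$ is a point of non-positive curvature if and only if $O^*$ is a straight line, i.e.\ if and only if $O$ is a projective center. This yields (1)$\Leftrightarrow$(2) in the planar case.

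\emph{To reach arbitrary dimension} I would reduce to planar sections exactly as in the proof of Theorem~\ref{thm:HPMcent}: by \cite[Lemma~12.1, pp.~226]{BusemannKelly1953} a bounded convex body is an ellipsoid if and only if each of its $2$-dimensional sections is an ellipse, and both the sectional curvature at $O$ and the shape of $O^*$ are determined slice-by-slice by the planar Hilbert geometries cut out by the $2$-planes through $O$. Reading off the planar equivalence (1)$\Leftrightarrow$(2) on every such slice and reassembling the polar lines into the hyperplane $O^*$ gives the statement in $\mathcal I\subseteq\mathbb R^n$. Chaining with (2)$\Leftrightarrow$(3) then closes the cycle (1)$\Leftrightarrow$(2)$\Leftrightarrow$(3).

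\emph{The main obstacle} is the single step (1)$\Leftrightarrow$(2): translating Busemann's pointwise notion of non-positive curvature into the purely projective condition that $O^*$ be straight. Everything else --- the equivalence of metric and projective centers, and the passage to $n$ dimensions --- is formal once \eqref{equ:MCthenPC} and the section principle are available; it is this curvature step, supplied by \cite{KellyStraus}, that carries the real content.
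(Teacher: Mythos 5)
You should know at the outset that the paper contains no proof of this corollary to compare against: in the source, Corollary~\ref{thm:MCPCNC} is dead code, wrapped inside the unexpanded macro \texttt{\textbackslash def\textbackslash loose}, so it never appears in the compiled paper, let alone with a proof. That said, your reconstruction is exactly the assembly the surrounding machinery points to: the link (2)$\Leftrightarrow$(3) is what \eqref{equ:MCthenPC} was built for, and (1)$\Leftrightarrow$(2) is the Kelly--Straus curvature theorem, which is evidently why the paper bothers to reprove their Lemma~1 and Corollary as \eqref{equ:MCisPC}. So in approach you match the intended proof; the substance is right.

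Three points need tightening. First, \eqref{equ:MCthenPC} is \emph{not} available ``verbatim'' in $\mathbb R^n$: it is established inside the proof of Theorem~\ref{thm:HPMcent} \emph{after} the explicit reduction to dimension two (via \cite[Lemma 12.1]{BusemannKelly1953}), so in higher dimension even your ``no further work'' link (2)$\Leftrightarrow$(3) needs the same slicing you deploy for (1)$\Leftrightarrow$(2). This is harmless: $\rho^{}_{d;O}$ preserves every $2$-plane through $O$ (such a plane is a union of invariant lines), the restriction of $d_{\mathcal I}$ to a planar section through $O$ is the Hilbert metric of that section, and your observation that a set meeting each line through $O$ exactly once and cutting each plane through $O$ in a line must be a hyperplane assembles the sectional polars --- but it should be said, not waved at; note the paper itself already applies \eqref{equ:MCthenPC} in $n$ dimensions in Theorem~\ref{thm:hypPMcentBolyai} with the same silence. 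Second, for (2)$\Rightarrow$(3) you should not lean on the general ``if'' direction of \eqref{equ:MCthenPC}, whose justification for an \emph{arbitrary} projective metric (that $\bar\rho^{}_{\varpi O}$ is an isometry of the transported metric) is the thinnest step in the paper; for the Hilbert metric it is immediate and clean, since any projectivity mapping $\mathcal I$ onto itself preserves cross ratios and hence is a $d_{\mathcal I}$-isometry, so $\varpi^{-1}\circ\bar\rho^{}_{\varpi O}\circ\varpi$ is visibly a metric point reflection at $O$. Third, in the slicing step for (1)$\Leftrightarrow$(2) you must fix the pointwise form of ``non-positive curvature at $O$'': if it is read as Busemann's midpoint inequality with one vertex at $O$, i.e.\ $2\,d_{\mathcal I}\bigl(m(O,X),m(O,Y)\bigr)\le d_{\mathcal I}(X,Y)$ for $X,Y$ near $O$ --- the form the planar Kelly--Straus theorem addresses --- then every relevant triple spans a plane through $O$ and your slice-by-slice reduction is exact; under the unrestricted local-triple version the relevant planes need not pass through $O$, and your claim that the curvature at $O$ is ``determined slice-by-slice by the $2$-planes through $O$'' would not literally cover it. With these repairs your argument is correct and is, as far as the source reveals, the intended one.
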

}

\begin{acknowledgement}
 The author thanks J\'anos Kincses and Tibor \'Odor
 for the oral discussions on the subject of this paper.
 Thanks are also due to the anonymous referee
 for raising the problems solved in Section~\ref{sec:finCenters}. 
\end{acknowledgement}

\end{document}